\setlist[enumerate]{leftmargin=.5in}
\setlist[itemize]{leftmargin=.5in}
\crefname{hypothesis}{Hypothesis}{Hypotheses}
\title{Absence of the analytic continuation of elastic transmission eigenfunctions at rectangular corners \thanks{Submitted to the editors DATE.
%\funding{This research is supported by}
}
}
\author{Jianli Xiang\thanks{Three Gorges Mathematical Research Center, College of Mathematics and Physics, China Three Gorges University, Yichang 443002, People's Republic of China(\email{xiangjianli@ctgu.edu.cn}).}\and
Guanghui Hu \thanks{Corresponding author: School of Mathematical Sciences and LPMC, Nankai University, Tianjin 300071, People's Republic of China (\email{ghhu@nankai.edu.cn}).}
}
\newcommand{\be}{\begin{eqnarray}}
\newcommand{\ben}{\begin{eqnarray*}}
\newcommand{\en}{\end{eqnarray}}
\newcommand{\enn}{\end{eqnarray*}}
\definecolor{rot}{rgb}{1,0,0}
\begin{document}

\maketitle

\begin{abstract}
  We study time harmonic scattering problems in linear elasticity in $\mathbb{R}^{2}$. We show that certain penetrable scatterers with rectangular corners scatter every incident wave nontrivially. Even though these scatterers have interior transmission eigenvalues, the far field operator has a trivial kernel at every real frequency. Our approach relies on a special decomposition of the elastic Lam\'e operator and also provides an alternative idea for treating inverse elastic medium problems with a general polygonal support.
\end{abstract}

\begin{keywords}
  Inverse medium scattering, elastic waves, Lam\'e equation, transmission eigenfunctions, rectangular corners.
\end{keywords}

\begin{AMS}
  35J20, 35P25, 35R30, 45Q05
\end{AMS}

\section{Introduction}

Recently, the scattering problems for elastic waves have received increasing attention because of the significant applications in geophysics and seismology (\cite{Ammari2015, Bonnet2005,Landau1986}), and the inverse elastic scattering problems are of great interest because they describe many geometrical and physical identification applications concerning deformable bodies. The propagation of elastic waves is governed by the Navier equation, which is complex due to the coupling of the compressional and shear waves with different wavenumbers. The inverse problem is to find the shape and location of an obstacle by measuring the scattered waves, close or far from the scatterer. In the theory of elastic waves, there is a great wealth of results on the inverse problem by the linear sampling method \cite{Arens2001, CGK2002,GM2007,PS2007}, the factorization method \cite{AK2002,CKA2007,Ser2005,XY2022}, the enclosed method \cite{MI2007}, a gradient descent method \cite{Louer2015,PK2000}, the probing method \cite{KS2015} or the direct sampling method \cite{LLS2023}.

For inverse medium scattering, there appears to an important interior transmission problem (ITP), which is formulated as a boundary value problem for a pair of governing field equations over a bounded domain $D\subseteq\mathbb{R}^{2}$ representing the support of a scatterer that are coupled through the Cauchy data on $\partial D$.
Assuming time-harmonic plane wave propagation and scattering, the existence of a non-trivial solution to the ITP with homogeneous boundary conditions amounts to that of an incident wave field, illuminating inhomogeneity $D$, that generates no scattered field. Values of frequencies $\omega$ at which this phenomenon occurs are the so-called interior transmission eigenvalues (ITE), and their role is important since the solvability of the far-field equation is problematic and also many reconstruction algorithms in inverse scattering will not work correctly at these frequencies \cite{CGK2002, CKA2007,GM2007,PS2007, Ser2005,XY2022}.

The ITP was first discussed by Colton, Kirsch and Monk in the mid 1980s in connection with the inverse scattering problem for acoustic waves in an inhomogeneous medium \cite{CM1988,Kir1986}. Since that time the ITP has come to play a basic role in inverse scattering for both acoustic, electromagnetic and elastic waves. The ITP is a type of nonelliptic and non-self-adjoint problem, so its study is mathematically interesting and challenging. In 2002, the discreteness of the ITE for the non absorbing, anisotropic and inhomogeneous elasticity was investigated by Charalambopoulos \cite{Char2002}, in which the ITP was represented as a superposition of an injective operator and a compact perturbation by constructing a corresponding modified ITP. The conditions in \cite{Char2002} imposed on the entries of the tensors are rather complicated and their reduction to the isotropic case is non-obvious. In order to clarify this point, the paper \cite{Char2008} made an appropriate decomposition of the corresponding far-field operator, established an isomorphism property for the boundary integral operator and finally stated a sufficient condition on the elastic parameters of the inclusion and the host environment for the discreteness of the set formed by the ITE. In 2010, Bellis and Guzina \cite{BG2010} considered a piecewise-homogeneous, elastic or viscoelastic obstacle in a likewise heterogeneous background solid and they had proved the discreteness as well as the nonexistence case of ITE. In 2013, the existence of the ITE was presented in \cite{BCG2013} for the first time. So far, many results have focused mostly on the spectral properties (existence, discreteness, infiniteness and Weyl's laws) of the ITE.

In fact, the transmission eigenfunctions reveal certain distinct and intriguing features about the scatterers.
Then some results about the eigenfunctions themselves started surfacing, in the form of completeness of the eigenfunctions in some sense.
Under certain regularity assumptions, geometric structures of transmission eigenfunctions are discovered.
In \cite{B2018,BL2017,BL2021,BLL2017}, it had shown that eigenfunctions with some regularity vanish at corners, edges or high-curvature places of the support of an inhomogeneous index of refraction.
For radial geometry, the boundary-localizing properties were exhibited in the acoustic transmission eigenfunctions \cite{CDH2023,CDL2023,JL2023}, and further be extended to the elastic transmission eigenfunctions in \cite{JLZ2023}.
For an $n$-dimensional rectangle, Blasten \cite{BPS2014} proved that the transmission eigenfunctions for the Helmholtz equation cannot be analytically extended across the boundary based on the use of complex geometric optics (CGO) solutions.
Then the CGO approach was extended to the cases of a convex corner in $\mathbb{R}^{2}$ and a circular conic corner in $\mathbb{R}^{3}$ whose opening angle is outside of a countable subset of $(0,\pi)$ \cite{PSV2017}.
For more general corners in $\mathbb{R}^{2}$ and edges in $\mathbb{R}^{3}$ with an arbitrary angle in the case of piecewise analytic potentials \cite{EH2015}, the failure of the analytic continuation of transmission eigenfunctions was proved by the expansion of solutions to the Helmholtz equation and also lead to uniqueness in shape identification of a convex penetrable obstacle of polygonal or polyhedral type by a single far-field pattern.
Furthermore, the arguments were achieved for curvilinear polygons in $\mathbb{R}^{2}$, curvilinear polyhedra and circular cones in $\mathbb{R}^{3}$ with an arbitrary piecewise H\"older continuous potential \cite{EH2018}, that the approach relied on the corner singularity analysis of solutions to the inhomogeneous Laplace equation in weighted H\"older spaces.
However, the existing literature on the elastic transmission eigenfunctions is relatively scarce.
In \cite{BL2018}, the vanishing property at corners and edges for smooth transmission eigenfunctions in elasticity was shown by constructing a new type of exponential solution for the Navier equation.

The aim of this paper is to verify the failure of the analytic continuation of elastic transmission eigenfunctions.
Different from the CGO-solutions method, our analysis relies on the diagonalized decomposition of the Lam\'e operator that are of independent interest.
Moreover, we obtain a uniqueness result for the inverse medium scattering problem with a single
incoming wave within the class of convex polygons whose internal or external corners are standard right angles.

The rest of the paper is organized as follows. In section \ref{sec2}, mathematical formulations and main results are presented for the elastic interior transmission eigenfunctions.
In section \ref{sec3}, we derive our crucial auxiliary result on the transmission problem for the Navier equation. At the end of section \ref{sec4}, we verify Theorem \ref{Main} and Corollaries \ref{cor1}, \ref{cor2} relying on Lemma \ref{mainlem}.

\section{Mathematical formulation and main results} \label{sec2}
In this paper, we consider the inverse medium scattering problem of time-harmonic elastic waves. Assume that the Lam\'e constants $\lambda$ and $\mu$ satisfying $\mu>0$, $\mu+\lambda>0$. The scattering problem we are dealing with is now modeled by the following Navier equation
\begin{equation} \label{a}
\mu\Delta u+(\lambda+\mu)\nabla(\nabla\cdot u)+\omega^{2}\rho(x)u=0, \quad {\rm in} \quad \mathbb{R}^{2}.
\end{equation}
Here, $u=u^{{\rm in}}+u^{{\rm sc}}$ is the total displacement field which is the superposition of the given incident plane wave $u^{{\rm in}}$ and the scattered wave $u^{{\rm sc}}$. The circular frequency $\omega>0$ and the density function $\rho(x)\in L^{\infty}(\mathbb{R}^{2})$ satisfies $\rho(x)=1$ in $\mathbb{R}^{2}\backslash\overline{D}$.

By the Helmholtz decomposition theorem, the scattered field $u^{{\rm sc}}$ can be decomposed as $u^{{\rm sc}}=u^{{\rm sc}}_{p}+u^{{\rm sc}}_{s}$, where $u^{{\rm sc}}_{p}$ denotes the compressional wave and $u^{{\rm sc}}_{s}$ denotes the shear wave, $k_{p}$ is the compressional wave number and $k_{s}$ is the shear wave number. They are given by the following forms respectively:
\begin{equation*}
u^{{\rm sc}}_{p}=-\frac{1}{k_{p}^{2}}{\rm grad}\,{\rm div}\, u^{{\rm sc}},\quad k_{p}=\frac{\omega}{\sqrt{2\mu+\lambda}}, \quad
u^{{\rm sc}}_{s}=\frac{1}{k_{s}^{2}}\overrightarrow{{\rm curl}}\, {\rm curl}\, u^{{\rm sc}},\quad k_{s}=\frac{\omega}{\sqrt{\mu}},
\end{equation*}
and they satisfy $\Delta u^{{\rm sc}}_{p}+k_{p}^{2}u^{{\rm sc}}_{p}=0$ and $\Delta u^{{\rm sc}}_{s}+k_{s}^{2}u^{{\rm sc}}_{s}=0$ in $\mathbb{R}^{2}\backslash\overline{D}$. In addition, the Kupradze radiation condition is required to the scattered field $u^{{\rm sc}}$, i.e.
\begin{equation}  \label{rad}
\mathop {\lim }\limits_{r \to \infty }\sqrt[]{r} \Big(\frac{\partial u^{{\rm sc}}_{p}}{\partial r}-ik_{p}u^{{\rm sc}}_{p}\Big)=0,~\quad~
\mathop {\lim }\limits_{r \to \infty }\sqrt[]{r} \Big(\frac{\partial u^{{\rm sc}}_{s}}{\partial r}-ik_{s}u^{{\rm sc}}_{s}\Big)=0, \quad r=|x|.
\end{equation}
And the radiation condition (\ref{rad}) is assumed to hold in all directions $\hat{x}=x/|x|\in\mathbb{S}$, here $\mathbb{S}:=\{x\in \mathbb{R}^2:|x|=1\}$ denotes the unit circle in $\mathbb{R}^{2}$.

It is well known that the radiating solution to the Navier equation has the following asymptotic behavior:
\begin{equation*}
u^{{\rm sc}}(x)=\frac{e^{ik_{p}r}}{\sqrt{r}}u_{p}^{\infty}(\hat{x}) +\frac{e^{ik_{s}r}}{\sqrt{r}}u_{s}^{\infty}(\hat{x})
+\mathcal{O}\left(\frac{1}{r^{3/2}}\right),\quad {\rm as} \quad r\rightarrow+\infty,
\end{equation*}
where $u_{p}^{\infty}(\hat{x})\parallel\hat{x}$ and $u_{s}^{\infty}(\hat{x})\perp\hat{x}$. The functions $u_{p}^{\infty}$, $u_{s}^{\infty}$ are known as compressional and shear far field pattern of $u^{{\rm sc}}$, respectively,
\begin{equation*}
u^{\infty}(\hat{x})=u_{p}^{\infty}(\hat{x})+u_{s}^{\infty}(\hat{x}).
\end{equation*}

For notational convenience in the following text, we set $\mathcal{L}:=\mu\Delta+(\lambda+\mu) \nabla \nabla\cdot$, and the incident wave $u^{{\rm in}}$ satisfies the equation:
\begin{equation} \label{b}
\mathcal{L}u^{{\rm in}}+\omega^{2}u^{{\rm in}}=0, \quad {\rm in} \quad \mathbb{R}^{2},
\end{equation}
and therefore that
\begin{equation*}
\mathcal{L}u^{{\rm sc}}+\omega^{2}\rho u^{{\rm sc}}=\omega^{2}(1-\rho) u^{{\rm in}}, \quad {\rm in} \quad \mathbb{R}^{2}.
\end{equation*}
For penetrable elastic scattering problem we need to define the stress vector (or traction): on any curve in $\mathbb{R}^{2}$ with a normal $\nu$, we can define the stress vector $Tu$ by
\begin{equation*}
Tu:=2\, \mu\, \partial_\nu\, u+\lambda\, \nu\, {\rm div}\, u+\mu\, \nu \times {\rm curl}\, u.
\end{equation*}
\begin{definition}
$\omega^{2}$ is called an interior transmission eigenvalue, if there are nontrivial $w$ and $v$ satisfying the interior transmission problem:
\begin{equation} \label{ITP}
\left\{\begin{array}{ll}
\mathcal{L}v+\omega^{2}v=0, & {\rm in} \quad D,\\
\mathcal{L}w+\omega^{2}\rho w=\omega^{2}(1-\rho) v,  & {\rm in} \quad D,  \\
w=0, \quad  Tw=0,  & {\rm on} \quad \partial D.
\end{array}\right.
\end{equation}
\end{definition}
Note that in our applications we have $w=u^{{\rm sc}}$ and $v=u^{{\rm in}}$.
The results of \cite{BCG2013} guarantee that, the pair $(\rho,D)$ have infinitely many interior transmission eigenvalues; i.e., there are infinitely many positive values of $\omega^{2}$ for which there exist nontrivial solutions to \eqref{ITP}. These results do not guarantee that any eigenfunction pair $(u^{{\rm sc}},u^{{\rm in}})$ extends to $\mathbb{R}^{2}$. The scattered wave $u^{{\rm sc}}$ belongs to $[H_{0}^{2}(D)]^{2}$, and therefore always extends to all of $\mathbb{R}^{2}$ as a function which is zero outside $D$. The waves $u^{{\rm in}}$, however, are only guaranteed to satisfy $u^{{\rm in}}\in[L^{2}(D)]^{2}$ and $\mathcal{L}u^{{\rm in}} \in [L^{2}(D)]^{2}$.
We will refer to $u^{{\rm in}}$ as an interior incident wave to emphasize that it is only defined inside $D$. A consequence of Theorem \ref{Main} below, is that, if $\rho$ satisfies the hypothesis of the theorem, and $D$ contains an open neighborhood of the corner point, then there are no nontrivial solutions to \eqref{ITP}, i.e., no non-scattering frequency for the pair $(\rho,D)$.

\begin{theorem} \label{Main}
Assume that $D\subset\mathbb{R}^{2}$ is a polygon whose internal or external angles are standard right angles and $\rho(x)|_{D}=\rho_{0}$ is a constant different from one (i.e. $\rho_{0}\neq1$). The pair $(u^{{\rm sc}},u^{{\rm in}})$ are interior transmission eigenfunctions of $\rho$ in $D$, i.e solutions to \eqref{ITP}, then $u^{{\rm in}}$ cannot be extended as an incident wave (i.e a solution to \eqref{b}) to any open neighborhood of the corner.
\end{theorem}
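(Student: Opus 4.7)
The plan is a proof by contradiction: if the incident wave $v=u^{\rm in}$ extends across the right-angle corner, one can construct a second Navier field that shares full Cauchy data with $v$ on the two perpendicular edges meeting at the corner, and Lemma~\ref{mainlem} will rule this out. Place the corner at the origin with the two sides of $\partial D$ meeting there lying along the positive coordinate half-axes (or their $3\pi/2$ analogue if the interior angle is external), and let $U$ be a small open neighborhood to which, by assumption, $v$ extends as a solution of \eqref{b}.

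First I would build a single auxiliary field. Because $w\in[H_0^2(D)]^2$, the trivial extension $\widetilde w$ of $w$ by zero to $U\setminus\overline D$ belongs to $[H^2(U)]^2$. Set $u:=v+\widetilde w$ on $U$. Using $\mathcal{L}v=-\omega^2 v$ in $D$ and the second equation of \eqref{ITP}, a direct calculation gives
\begin{equation*}
\mathcal{L}u+\omega^2\rho_0\,u=\omega^2(\rho_0-1)v+\omega^2(1-\rho_0)v=0\quad\text{in }U\cap D,\qquad u=v\quad\text{in }U\setminus\overline D,
\end{equation*}
and the boundary conditions $w=Tw=0$ on $\partial D\cap U$ convert into the Cauchy-matching identities $u=v$ and $Tu=Tv$ on both perpendicular sides emanating from the corner.

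Next, I would invoke the auxiliary lemma. Since $\rho_0\neq 1$, the fields $u$ and $v$ solve Navier equations with distinct density constants and share full Cauchy data on the two perpendicular segments of $\partial D$ at the vertex. Lemma~\ref{mainlem} is designed precisely to forbid this configuration, forcing $v\equiv 0$ in a one-sided neighborhood of the corner inside $D$. Unique continuation for the elliptic Navier system then propagates $v\equiv 0$ throughout $D$, and the second equation of \eqref{ITP} forces $w\equiv 0$ as well, contradicting the nontriviality of the transmission eigenpair and completing the proof. The argument for Corollaries~\ref{cor1} and \ref{cor2} would follow by essentially the same extension mechanism applied to the single-measurement inverse problem.

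The main obstacle is Lemma~\ref{mainlem} itself, which is to be established in Section~\ref{sec3} via the diagonalized Helmholtz decomposition of the Lam\'e operator announced in the abstract. Each of $u,v$ should be written as $\nabla\phi+\overrightarrow{\mathrm{curl}}\,\psi$, converting the matching problem into four scalar Helmholtz equations with wavenumbers $k_p,k_s$ (for $v$) and $k_p\sqrt{\rho_0},k_s\sqrt{\rho_0}$ (for $u$), and one expands each potential in polar Fourier--Bessel series at the vertex. The traction operator $T$ couples pressure and shear modes on the two edges; the right-angle assumption is essential here, because only at opening $\pi/2$ (or $3\pi/2$) do the angular modes on the two perpendicular edges align, so that the coupled matching conditions can be decoupled mode by mode, yielding an overdetermined linear system whose only solution under $\rho_0\neq 1$ is trivial.
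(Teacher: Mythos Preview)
Your reduction of Theorem~\ref{Main} to Lemma~\ref{mainlem} is correct and matches the paper's proof. The paper is slightly more direct: it feeds the pair $(w,u_0)=(u^{\rm sc},u^{\rm in})$ (with $u^{\rm sc}$ extended by zero across the corner) straight into Lemma~\ref{mainlem}, whose hypotheses are already stated in the coupled form $\mathcal{L}w+q_1\omega^2 w=\omega^2(q_2-q_1)u_0$, $\mathcal{L}u_0+q_2\omega^2 u_0=0$, $w=Tw=0$ on $\Gamma_R^\pm$, with $q_1=\rho_0$, $q_2=1$. Your detour through $u=v+\widetilde w$ just relabels the same data.

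Where you diverge from the paper is in your sketch of how Lemma~\ref{mainlem} is to be proved. The paper does \emph{not} use Helmholtz potentials and Fourier--Bessel expansions. Instead it argues at the level of Taylor coefficients: using the matrix factorization $\mathcal{L}=P\,\diag(\mu,\lambda+2\mu)\,P$ of Lemma~\ref{lem3} and the complex operators $\Lambda_1=\partial_{\tau_1}-i\partial_{\tau_2}$, $\Lambda_2=\partial_{\tau_1}+i\partial_{\tau_2}$, it shows by a lengthy induction that every derivative $\nabla_x^m w$ and $\nabla_x^m u_0$ vanishes at the corner $O$, whence analyticity forces $w=u_0\equiv0$. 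The right-angle hypothesis enters not through angular mode alignment of a Bessel basis, but through the elementary fact that on two perpendicular edges the tangential derivatives $\partial_{\tau_1},\partial_{\tau_2}$ already span $\nabla_x$; the Cauchy data $w=\partial_\nu w=0$ on \emph{both} edges therefore control all mixed derivatives $\partial_{\tau_1}^j\partial_{\tau_2}^{m-j}w(O)$ except the middle ones, and those are killed by the PDE via a $2\times2$ linear system in $\Lambda_1,\Lambda_2$ whose determinant $-4\mu^2(\lambda+2\mu)^2$ is nonzero. Your Fourier--Bessel route is plausible in spirit (it is close to what \cite{EH2015} does for the scalar Helmholtz equation), but the traction operator couples the $p$ and $s$ potentials on each edge, and decoupling them mode by mode in the elastic setting is considerably more delicate than you suggest; in any case it is not the method of Section~\ref{sec3}.
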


If there exist a Herglotz wave function $u_{g}$ defined by
\begin{equation*}
u_{g}(x)=e^{-i\pi/4}\int_{\mathbb{S}}\left\{\sqrt{\frac{k_{p}}{\omega}} e^{ik_{p}d\cdot x}dg_{p}(d)+\sqrt{\frac{k_{s}}{\omega}}e^{ik_{s}d\cdot x}d^{\bot}g_{s}(d)\right\}{\rm d}s(d), \quad  g=(g_{p},g_{s})\in[L^{2}(\mathbb{S})]^{2},
\end{equation*}
such that the function pair $(u^{{\rm sc}}, u_{g})$ is a solution to \eqref{ITP}, then $\omega$ is called non-scattering frequency. A well known result states that the nullspace of the far-field operator $F$ is trivial if and only $\omega$ is not a non-scattering frequency.
\begin{corollary} \label{cor1}
A scatterer $(\rho,D)$ which satisfies the hypothesis of Theorem \ref{Main} has no non-scattering frequencies, or equivalently, the far-field operator $F$ is injective.
\end{corollary}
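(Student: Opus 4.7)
I would reduce Corollary \ref{cor1} to a one-line application of Theorem \ref{Main}, after first recording the well-known equivalence between triviality of $\ker F$ and the absence of non-scattering frequencies.

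For the equivalence, I would recall that the far-field operator acts by $Fg = u^{\infty}$, where $u^{\infty}$ is the far-field pattern of the scattered field $u^{{\rm sc}}$ produced by the Herglotz incident wave $u_{g}$. If $Fg = 0$, Rellich's lemma applied component-wise to the compressional and shear parts (each satisfying a Helmholtz equation outside $\overline{D}$ together with the Kupradze radiation condition \eqref{rad}) forces $u^{{\rm sc}} \equiv 0$ in $\mathbb{R}^{2}\setminus\overline{D}$. Continuity of the displacement and the traction across $\partial D$ then yields $u^{{\rm sc}}=0$ and $Tu^{{\rm sc}}=0$ on $\partial D$ from inside, so that $w:=u^{{\rm sc}}|_{D}$ and $v:=u_{g}$ satisfy \eqref{ITP}; the converse is immediate. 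Hence $F$ is injective if and only if there is no non-scattering frequency.

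For the main step, I would argue by contradiction. Assume $\omega$ is a non-scattering frequency, so that there exist $g=(g_{p},g_{s})\neq 0$ and $u^{{\rm sc}}$ with $(u^{{\rm sc}},u_{g})$ a nontrivial solution of \eqref{ITP}. The Herglotz wave $u_{g}$ is a superposition of elastic plane waves $e^{ik_{p}d\cdot x}$ and $e^{ik_{s}d\cdot x}$ integrated against $L^{2}$ densities over $\mathbb{S}$, hence is entire real-analytic and satisfies \eqref{b} on all of $\mathbb{R}^{2}$; in particular it extends as a solution of \eqref{b} to any open neighborhood of any corner of $D$. Because $D$ is a right-angled polygon with $\rho|_{D}=\rho_{0}\neq 1$, Theorem \ref{Main} applies at any corner $x_{0}\in\partial D$ and forbids precisely such an extension for the interior incident wave of a nontrivial eigenfunction pair. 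The only escape is $u_{g}\equiv 0$, which via the second equation of \eqref{ITP} forces $u^{{\rm sc}}\equiv 0$, contradicting nontriviality.

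The entire mathematical content is therefore absorbed into Theorem \ref{Main}; the remaining bookkeeping is to match the ``nontrivial'' clauses on the two sides, i.e.\ to pass from $g\neq 0$ to $u_{g}\not\equiv 0$. This is the injectivity of the Herglotz map $g\mapsto u_{g}$, a standard consequence of the uniqueness of the $L^{2}$ plane-wave representation of entire elastic waves of exponential type on $\mathbb{S}$. I expect no genuine obstacle beyond this, since the crucial analytic ingredient (absence of the analytic continuation at right-angled corners) has already been done in Theorem \ref{Main}.
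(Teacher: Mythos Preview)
Your argument is correct and mirrors the paper's own proof: both start from a nontrivial element of $\ker F$, use Rellich's lemma (plus unique continuation and $H^{2}_{\rm loc}$-regularity/transmission conditions) to show $u^{\rm sc}$ vanishes outside $\overline{D}$ with zero Cauchy data on $\partial D$, and then invoke Theorem \ref{Main} against the global analyticity of the Herglotz wave $u_g$. You are in fact slightly more careful than the paper in isolating the step $g\neq 0\Rightarrow u_g\not\equiv 0$ (injectivity of the Herglotz map), which the paper leaves implicit.
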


\begin{corollary} \label{cor2}
Assume that the scatterer $(\rho,D)$ satisfies the hypothesis of Theorem \ref{Main} and $D$ is convex, then $\partial D$ can be uniquely determined by a single far field pattern $u^{\infty}(\hat{x})$ for all $\hat{x}\in\mathbb{S}$.
\end{corollary}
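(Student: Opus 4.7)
The plan is to argue by contradiction using the local corner-vanishing mechanism supplied by Theorem \ref{Main} (equivalently, its local auxiliary form Lemma \ref{mainlem}). Suppose two admissible pairs $(\rho_1,D_1)$ and $(\rho_2,D_2)$ both satisfy the hypotheses of Theorem \ref{Main} and generate the same far-field pattern $u^{\infty}$ for a common incident wave $u^{\mathrm{in}}$. Splitting $u^{\infty}$ into its compressional and shear components and invoking Rellich's lemma together with unique continuation for the Navier system yields $u^{\mathrm{sc}}_1=u^{\mathrm{sc}}_2$ in the unbounded connected component of $\mathbb{R}^2\setminus\overline{D_1\cup D_2}$, whence the total fields $u_1$ and $u_2$ agree there as well.

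Next, I would exploit convexity to isolate a single corner. If $D_1\neq D_2$, write $D_i=\mathrm{conv}(V_i)$ with $V_i$ the vertex set of $D_i$. Assuming $V_i\subseteq\overline{D_{3-i}}$ for both $i=1,2$ would give $\overline{D_1}=\overline{D_2}$, contradicting $D_1\neq D_2$; hence, after relabeling, there exists a vertex $x_0$ of $\partial D_1$ with $x_0\notin\overline{D_2}$. Choose an open ball $B$ centered at $x_0$ small enough that $\overline B\cap\overline{D_2}=\emptyset$ and that $\partial D_1\cap B$ consists only of the two edges meeting at the right-angled vertex $x_0$.

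On $B$ the density $\rho_2$ equals one, so $v:=u_2=u^{\mathrm{in}}+u^{\mathrm{sc}}_2$ is a real-analytic solution of $\mathcal{L}v+\omega^2 v=0$ on all of $B$, providing a genuine incident wave defined in a full neighborhood of the corner. Setting $w:=u_1-v$ on $B$, the agreement of total fields from the first step gives $w\equiv 0$ on $B\setminus\overline{D_1}$, and the elastic transmission conditions (continuity of displacement and of the traction $T(\cdot)$ across $\partial D_1$) then force $w=Tw=0$ on $\partial D_1\cap B$. A direct substitution yields $\mathcal{L}w+\omega^2\rho_1 w=\omega^2(1-\rho_1)v$ on $D_1\cap B$, so $(w,v)$ is exactly an interior transmission pair of the form \eqref{ITP}, now localized at the right-angled corner $x_0$ of $D_1$.

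Applying Lemma \ref{mainlem} to this local pair asserts that $v$ cannot be extended as a solution of \eqref{b} to any open neighborhood of $x_0$; but $v$ already solves the free Navier equation on the whole ball $B$, a contradiction. Hence $D_1=D_2$. The main obstacle is the geometric reduction in the second paragraph: one must produce a ball on which the local ITP structure at a corner of $D_1$ and the free extension of the rival total field $v$ are simultaneously available, and it is precisely convexity of both polygons (combined with the right-angle hypothesis required by Lemma \ref{mainlem}) that makes this arrangement possible.
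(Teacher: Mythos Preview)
Your overall strategy matches the paper's: contradiction via Rellich, isolation of a corner of $D_1$ lying outside $\overline{D_2}$, and then a local application of the corner-vanishing mechanism. Your convexity argument for producing such a vertex is in fact more carefully stated than the paper's. However, two steps are not correct as written.

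The main gap is that your pair $(w,v)$ does \emph{not} satisfy the hypotheses of Lemma~\ref{mainlem}. That lemma requires $\mathcal{L}w+q_1\omega^2 w=\omega^2(q_2-q_1)u_0$ to hold in the \emph{full} disk $B_R$ with a constant $q_1$; this is precisely what makes $w$ analytic at the corner and allows the infinite-order derivative argument. Your $w=u_1-v$ satisfies the equation with $q_1=\rho_0$ only on $D_1\cap B$; on $B\setminus\overline{D_1}$ you have $w\equiv 0$, so $\mathcal{L}w+\rho_0\omega^2 w=0$ there, whereas the required right-hand side $\omega^2(1-\rho_0)v$ has no reason to vanish. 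The paper closes this gap by an extra step you omit: since $u_1=u_2$ and $Tu_1=Tu_2$ on $\partial D_1\cap B$ and $u_2$ is analytic, the Cauchy data of $u_1$ on each edge are analytic, and Cauchy--Kowalewski (with Holmgren for uniqueness) produces an analytic extension $\tilde u_1$ solving $\mathcal{L}\tilde u_1+c_0\omega^2\tilde u_1=0$ in $B_{\tilde R}\setminus\overline{D_1}$. Only after redefining $w$ as $u_1-u_2$ inside and $\tilde u_1-u_2$ outside does $w$ satisfy a constant-coefficient Navier equation on the full disk, so that Lemma~\ref{mainlem} applies.

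Second, the endgame is misstated. Lemma~\ref{mainlem} does not conclude that ``$v$ cannot be extended''; once its hypotheses hold it yields $w\equiv v\equiv 0$ in $B_{\tilde R}$. You cannot stop there, because local vanishing of $v=u_2$ is not yet a contradiction. The paper finishes by unique continuation ($u_2\equiv 0$ in $\mathbb{R}^2\setminus\overline{D_2}$), so that $u_2^{\mathrm{sc}}=-u^{\mathrm{in}}$ is simultaneously radiating and entire, hence identically zero, forcing $u^{\mathrm{in}}\equiv 0$ and contradicting the nontriviality of the incident wave. Your appeal to ``$(w,v)$ is an ITP pair, hence Theorem~\ref{Main} forbids the extension of $v$'' is not available either: $(w,v)$ is not a transmission eigenpair on $D_1$ (the Cauchy conditions hold only on $\partial D_1\cap B$), and nontriviality of $(w,v)$ has not been established.
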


\section{Preliminary lemmas} \label{sec3}
We introduce several notations before stating the main results. For $j\in\mathbb{N}_{0}:=\{0\}\cup\mathbb{N}$, $\nabla_{x}^{j}$ stands for the set of all partial derivatives of order $j$ with respect to the two components of $x=(x_{1},x_{2})^{\top}\in\mathbb{R}^{2}$, i.e.,
\begin{equation*}
\nabla_{x}^{j}u=\{\partial_{x_{1}}^{j_{1}} \partial_{x_{2}}^{j_{2}}:j_{1},j_{2}\in \mathbb{N}_{0},j_{1}+j_{2}=j \}.
\end{equation*}
Let $(r,\theta)$ be the polar coordinates of $(x_{1}, x_{2})^{\top}\in\mathbb{R}^{2}$, let $B_{R}$ denote the disk centered at the origin $O$ with radius $R$, and define
\begin{align*}
&\Gamma_{R}^{+}:=\{(r,\pi/2):0\leq r\leq R\}=\{(x_{1},x_{2}):x_{1}=0,~0\leq x_{2}\leq R\},\\
&\Gamma_{R}^{-}:=\{(r,0):0\leq r\leq R\}=\{(x_{1},x_{2}):x_{2}=0,~0\leq x_{1}\leq R\}.
\end{align*}

\begin{lemma} \label{lem1}
Let $\Gamma\subset\mathbb{R}^2$ be a Lipschitz surface. Then $u_1=u_2$ and $Tu_1=Tu_2$ on $\Gamma$ implies that $\partial_\nu u_1=\partial_\nu u_2$ on $\Gamma$.
\end{lemma}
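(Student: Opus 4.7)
The plan is to reduce the claim to a linear-algebra statement by setting $w := u_1 - u_2$ and exploiting the fact that $w$ vanishes on $\Gamma$. Under the hypothesis, we have $w = 0$ and $Tw = 0$ on $\Gamma$, and we want to conclude $\partial_\nu w = 0$ on $\Gamma$.

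First, I would note that since $w \equiv 0$ on the Lipschitz surface $\Gamma$, its tangential derivatives vanish a.e.\ on $\Gamma$. Letting $\tau$ be a unit tangent vector so that $\{\nu,\tau\}$ forms an orthonormal frame, this means $\partial_\tau w_k = 0$ on $\Gamma$ for $k=1,2$, so that the full gradient of each component reduces to its normal part: $\nabla w_k = (\partial_\nu w_k)\,\nu$ on $\Gamma$. From this I can rewrite $\mathrm{div}\,w$ and $\mathrm{curl}\,w$ on $\Gamma$ purely in terms of the two unknowns $a := \partial_\nu w_1$ and $b := \partial_\nu w_2$ and the components $(\nu_1,\nu_2)$ of $\nu$, namely $\mathrm{div}\,w = \nu_1 a + \nu_2 b$ and $\mathrm{curl}\,w = \nu_1 b - \nu_2 a$.

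Substituting these expressions into the traction
\begin{equation*}
Tw = 2\mu\,\partial_\nu w + \lambda\,\nu\,\mathrm{div}\,w + \mu\,\nu\times\mathrm{curl}\,w
\end{equation*}
and using $\nu_1^2+\nu_2^2=1$, the equation $Tw=0$ on $\Gamma$ becomes the $2{\times}2$ linear system
\begin{equation*}
M\begin{pmatrix}a\\ b\end{pmatrix}=0,\qquad
M = \mu I_{2} + (\lambda+\mu)\,\nu\,\nu^{\top}.
\end{equation*}
The matrix $M$ is precisely the symbol of the Lam\'e operator tested against $\nu$, and a direct computation yields $\det M = \mu(2\mu+\lambda)$.

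The remaining step is to observe that the Lam\'e parameter hypotheses $\mu>0$ and $\mu+\lambda>0$ force $2\mu+\lambda = \mu + (\mu+\lambda) > 0$, so $\det M > 0$ and $M$ is invertible. Hence $a = b = 0$, i.e.\ $\partial_\nu w = 0$ on $\Gamma$, which is the desired conclusion $\partial_\nu u_1 = \partial_\nu u_2$ on $\Gamma$. I do not anticipate a genuine obstacle here; the only delicate point is justifying that tangential derivatives of a trace vanishing on a Lipschitz $\Gamma$ also vanish (understood in the a.e.\ or distributional sense appropriate to the regularity of $u_1,u_2$), after which the argument is purely algebraic.
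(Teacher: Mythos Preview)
Your argument is correct and rests on the same key observation as the paper's proof: once $w:=u_1-u_2$ vanishes on $\Gamma$, all tangential derivatives of its components vanish, so $\mathrm{div}\,w$, $\mathrm{curl}\,w$, and hence $Tw$ on $\Gamma$ are expressible linearly in terms of $\partial_\nu w$ alone, and the resulting linear map is invertible because $\mu>0$ and $\lambda+2\mu>0$.

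The execution differs slightly. The paper introduces the tangential G\"unter derivative $\mathcal{M}w=\partial_\nu w-\nu\,\mathrm{div}\,w+\nu\times\mathrm{curl}\,w$, notes that $\mathcal{M}w=0$ on $\Gamma$ (since it is a tangential operator applied to a vanishing trace), and uses this identity to rewrite $Tw$ in two equivalent forms, from which $\nu\times Tw=\mu\,\nu\times\partial_\nu w$ and $\nu\cdot Tw=(\lambda+2\mu)\,\nu\cdot\partial_\nu w$; these give the tangential and normal components of $\partial_\nu w$ separately. You instead compute $Tw=M\,\partial_\nu w$ directly in Cartesian components with $M=\mu I+(\lambda+\mu)\nu\nu^{\top}$ and invert $M$. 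Your route is more elementary (no auxiliary operator needed) and dovetails nicely with the symbol decomposition in the paper's Lemma~\ref{lem3}; the paper's route makes the geometric content (normal vs.\ tangential parts of $\partial_\nu w$) more transparent. Both are equally valid.
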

\begin{proof}
Set $w:=u_1-u_2$, then we have $w=Tw=0$ on $\Gamma$. Introduce the tangential G\"unter derivative (refer to \cite{BTV2020}):
\begin{equation*}
\mathcal{M}w=\partial_\nu w-\nu\, {\rm div}\,w+\nu \times {\rm curl}\, w.
\end{equation*}
The condition $w=0$ on $\Gamma$ implies that $\mathcal{M}w=0$ on $\Gamma$. Note that
\begin{equation*}
Tw=\mu\, \partial_\nu w+(\lambda+\mu)\, \nu\, {\rm div}\, w
=(\lambda+2\mu)\,\partial_\nu w+(\lambda+\mu)\, \nu \times {\rm curl}\,w .
\end{equation*}
Then we obtain that
\begin{equation*}
\nu \times Tw=\mu\, \nu \times \partial_\nu w, \quad \nu \cdot Tw=(\lambda+2\mu)\, \nu \cdot \partial_\nu w,
\end{equation*}
respectively. Therefore,
\begin{equation*}
\partial_\nu w=\frac{1}{\mu}(\nu \times Tw) \times \nu+\frac{1}{\lambda+2\mu}(\nu \cdot Tw)\nu=0 \quad {\rm on} \quad \Gamma,
\end{equation*}
i.e., $\partial_\nu u_1=\partial_\nu u_2$ on $\Gamma$.
\end{proof}

\begin{lemma} \label{lem2}
Suppose $A\in\mathbb{R}^{2\times 2}$ is an invertible square matrix and $U,V\in\mathbb{R}^2$ are column vectors, then
\begin{equation*}
{\rm det}(A+UV^{\top})=(1+V^{\top}A^{-1}U)\,{\rm det}(A).
\end{equation*}
\end{lemma}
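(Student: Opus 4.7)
The plan is to use the standard factorization $A + UV^{\top} = A(I + A^{-1}UV^{\top})$, which is valid because $A$ is invertible. By multiplicativity of the determinant this gives
\begin{equation*}
\det(A+UV^{\top})=\det(A)\,\det(I+A^{-1}UV^{\top}),
\end{equation*}
so the lemma is reduced to proving the identity $\det(I+WV^{\top}) = 1+V^{\top}W$ with $W := A^{-1}U \in \mathbb{R}^{2}$.

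For this reduced identity I would present a direct $2\times 2$ computation. Writing $W=(w_{1},w_{2})^{\top}$ and $V=(v_{1},v_{2})^{\top}$, the matrix $I+WV^{\top}$ has entries $\delta_{ij}+w_{i}v_{j}$, and expanding the $2\times 2$ determinant yields $(1+w_{1}v_{1})(1+w_{2}v_{2})-(w_{1}v_{2})(w_{2}v_{1})$; the quartic cross terms cancel and leave $1+w_{1}v_{1}+w_{2}v_{2}=1+V^{\top}W$. Substituting $W=A^{-1}U$ and combining with the factorization above gives the claim.

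A more conceptual alternative, worth mentioning as a sanity check, is that $WV^{\top}$ is a rank-one matrix whose only nonzero eigenvalue is $V^{\top}W$ (with eigenvector $W$ when $V^{\top}W \neq 0$), while the orthogonal complement of $V$ is a $1$-dimensional kernel; hence $I+WV^{\top}$ has eigenvalues $1+V^{\top}W$ and $1$, whose product is the stated determinant. This argument works in arbitrary dimension but for the present two-dimensional setting the explicit calculation is shorter and self-contained.

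There is essentially no obstacle: the only ingredients are the multiplicativity of $\det$ and an elementary evaluation of a $2\times 2$ determinant. The degenerate cases $U=0$ or $V=0$ require no separate treatment, since then $UV^{\top}=0$ and both sides reduce to $\det(A)$.
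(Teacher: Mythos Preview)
Your proof is correct. The paper actually states Lemma~\ref{lem2} without proof (it is the well-known matrix determinant lemma), so there is nothing to compare; your factorization $A+UV^{\top}=A(I+A^{-1}UV^{\top})$ followed by the explicit $2\times 2$ verification of $\det(I+WV^{\top})=1+V^{\top}W$ is a clean and self-contained justification.
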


\begin{lemma}   \label{lem3}
Let $L=\mu(\eta_{1}^{2}+\eta_{2}^{2})I+(\lambda+\mu) \Theta\Theta^{\top}$ with $\Theta=(\eta_{1},\eta_{2})^{\top}$ and $\eta_{1}^{2}+\eta_{2}^{2}\neq0$, $I$ is the identity matrix in $\mathbb{R}^{2\times 2}$. Then

(i) ${\rm det}(L)=(\lambda+2\mu)\,\mu\,(\eta_{1}^{2}+\eta_{2}^{2})^2$.

(ii) The eigenvalues of $L$ are given by
\begin{equation*}
(\lambda+2\mu)(\eta_{1}^{2}+\eta_{2}^{2}),\quad \mu(\eta_{1}^{2}+\eta_{2}^{2}).
\end{equation*}

(iii) There exists an invertible matrix $P=\left(\begin{array}{cc}
-\eta_{2} & \eta_{1} \\
\eta_{1} & \eta_{2}
\end{array}\right)$ such that
\begin{equation*}
L=P\left(\begin{array}{cc}
\mu & 0 \\
0 & \lambda+2\mu
\end{array}\right)P,\quad\quad  P^{2}=(\eta_{1}^{2}+\eta_{2}^{2})\,I. %(备注：算子的性质)
\end{equation*}
\end{lemma}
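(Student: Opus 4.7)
The plan is to verify the three parts by direct linear algebra, leveraging Lemma \ref{lem2} for part (i) and explicit matrix arithmetic for parts (ii) and (iii). This is essentially a routine calculation; the only conceptual point worth flagging is that the factorization in (iii) is not the usual similarity $PDP^{-1}$ but rather $PDP$, which is legitimate precisely because $P^2 = (\eta_1^2+\eta_2^2)I$ (so $P^{-1}$ is a scalar multiple of $P$ itself).

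For (i), I would take $A=\mu(\eta_1^2+\eta_2^2)I$, $U=(\lambda+\mu)\Theta$, and $V=\Theta$ in Lemma \ref{lem2}. Then $A^{-1}=[\mu(\eta_1^2+\eta_2^2)]^{-1}I$ gives $V^{\top}A^{-1}U=(\lambda+\mu)/\mu$, and $\det A=\mu^{2}(\eta_1^2+\eta_2^2)^{2}$. Substituting yields $\det L=(1+(\lambda+\mu)/\mu)\mu^{2}(\eta_1^2+\eta_2^2)^{2}=\mu(\lambda+2\mu)(\eta_1^2+\eta_2^2)^{2}$.

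For (ii), I would observe that $\Theta\Theta^{\top}$ is a rank-one matrix whose range is $\mathrm{span}(\Theta)$ and whose kernel is $\mathrm{span}(\Theta^{\perp})$, where $\Theta^{\perp}=(-\eta_{2},\eta_{1})^{\top}$. Thus $L\Theta=\mu(\eta_1^2+\eta_2^2)\Theta+(\lambda+\mu)(\eta_1^2+\eta_2^2)\Theta=(\lambda+2\mu)(\eta_1^2+\eta_2^2)\Theta$ and $L\Theta^{\perp}=\mu(\eta_1^2+\eta_2^2)\Theta^{\perp}$, which identifies the two eigenvalues. (As a sanity check, the product of these eigenvalues equals the determinant computed in (i).)

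For (iii), I would first compute $P^{2}$ directly: the diagonal entries are both $\eta_1^2+\eta_2^2$ and the off-diagonal entries cancel, giving $P^2=(\eta_1^2+\eta_2^2)I$. Then I would expand $PDP$ with $D=\mathrm{diag}(\mu,\lambda+2\mu)$ by computing $PD$ and multiplying on the right by $P$; matching the entries with $L_{ij}=\mu(\eta_1^2+\eta_2^2)\delta_{ij}+(\lambda+\mu)\eta_i\eta_j$ finishes the identification. The computation is genuinely elementary; the only mildly nontrivial observation is that the columns of $P$ are exactly the eigenvectors $\Theta^{\perp}$ and $\Theta$ identified in part (ii), placed in the order that pairs them with $\mu$ and $\lambda+2\mu$, which is what makes the factorization consistent.

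The main (very modest) obstacle is just bookkeeping: getting signs and the order of the columns of $P$ right so that the eigenvalue $\mu$ lines up with $\Theta^{\perp}$ and $\lambda+2\mu$ with $\Theta$. No deeper difficulty arises.
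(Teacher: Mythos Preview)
Your proposal is correct and follows essentially the same approach as the paper: applying Lemma~\ref{lem2} for (i), identifying the eigenvalues for (ii), and using the eigenvectors $\Theta^{\perp}$, $\Theta$ to produce $P$ in (iii). The only cosmetic differences are your asymmetric choice $U=(\lambda+\mu)\Theta$, $V=\Theta$ (the paper takes $U=V=\sqrt{\lambda+\mu}\,\Theta$) and your direct computation of $L\Theta$, $L\Theta^{\perp}$ in (ii) rather than factoring the characteristic polynomial.
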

\begin{proof}
(i) Set $A=\mu\, (\eta_{1}^{2}+\eta_{2}^{2})\,I$, $U=V=\sqrt{\lambda+\mu}\,\Theta$. Then
\begin{equation*}
{\rm det}(A)=\mu^{2}\, (\eta_{1}^{2}+\eta_{2}^{2})^{2}, \quad 1+V^{\top}A^{-1}U=1+\frac{\lambda+\mu}{\mu}.
\end{equation*}
Then the proof is completed by applying Lemma \ref{lem2}.

(ii) Set $\widetilde{L}=L-xI$, then
\begin{equation*}
{\rm det}(\widetilde{L})=\big[(\lambda+2\mu)(\eta_{1}^{2}+\eta_{2}^{2})-x\big] \big[\mu(\eta_{1}^{2}+\eta_{2}^{2})-x\big],
\end{equation*}
which completes the proof.

(iii) Since $\lambda+\mu>0$, we know that the two eigenvalues of $L$ are different which implies that the matrix $L$ can be diagonalized. For each eigenvalue, calculate the corresponding eigenfunctions to obtain an invertible matrix $P$.
\end{proof}

Recall that $\mathcal{L}:=\mu\Delta+(\lambda+\mu) \nabla \nabla\cdot$, that is,
\begin{equation*}
\mathcal{L}=\mu\left(\partial_{x_{1}}^{2}+\partial_{x_{2}}^{2}\right)I
+(\lambda+\mu)\left(\begin{array}{c}
\partial_{x_{1}} \\  \partial_{x_{2}}
\end{array}\right)\left(\begin{array}{cc}
\partial_{x_{1}} & \partial_{x_{2}}
\end{array}\right).
\end{equation*}
Applying Lemma \ref{lem3} yields,
\begin{equation*}
\mathcal{L}=\left(\begin{array}{cc}
-\partial_{x_{2}} & \partial_{x_{1}} \\
\partial_{x_{1}} & \partial_{x_{2}}
\end{array}\right)\left(\begin{array}{cc}
\mu & 0 \\
0 & \lambda+2\mu
\end{array}\right)\left(\begin{array}{cc}
-\partial_{x_{2}} & \partial_{x_{1}} \\
\partial_{x_{1}} & \partial_{x_{2}}
\end{array}\right),
\end{equation*}
and then
\begin{equation*}
\mathcal{L}^{2}=\left(\partial_{x_{1}}^{2}+\partial_{x_{2}}^{2}\right)\left(\begin{array}{cc}
-\partial_{x_{2}} & \partial_{x_{1}} \\
\partial_{x_{1}} & \partial_{x_{2}}
\end{array}\right)\left(\begin{array}{cc}
\mu^{2} & 0 \\
0 & (\lambda+2\mu)^{2}
\end{array}\right)\left(\begin{array}{cc}
-\partial_{x_{2}} & \partial_{x_{1}} \\
\partial_{x_{1}} & \partial_{x_{2}}
\end{array}\right)
=\Delta P_{1}(\partial)P_{2}(\partial),
\end{equation*}
where
\begin{equation*}
P_{1}(\partial):=\left(\begin{array}{cc}
-\partial_{x_{2}} & \partial_{x_{1}} \\
\partial_{x_{1}} & \partial_{x_{2}}
\end{array}\right)\left(\begin{array}{cc}
\mu & 0 \\  0 & \lambda+2\mu
\end{array}\right)  \quad
P_{2}(\partial):=\left(\begin{array}{cc}
\mu & 0 \\   0 & \lambda+2\mu
\end{array}\right)\left(\begin{array}{cc}
-\partial_{x_{2}} & \partial_{x_{1}} \\
\partial_{x_{1}} & \partial_{x_{2}}
\end{array}\right).
\end{equation*}

\begin{lemma} \label{mainlem}
Suppose the Lam\'e parameters $\mu$, $\lambda$ and $q_{1}$, $q_{2}$ are all constants in $B_{R}$ satisfying $\mu>0$, $\lambda+\mu>0$ and $q_{1}\neq q_{2}$. If the solution pair $(w,u_{0})$ solves the coupling problem
\begin{equation} \label{eq}
\left\{\begin{array}{ll}
\mathcal{L}w+q_{1}\omega^{2}w=\omega^{2}(q_{2}-q_{1})u_{0} & \quad {\rm in} \quad B_{R},  \\
\mathcal{L}u_{0}+q_{2}\omega^{2}u_{0}=0  &\quad {\rm in} \quad B_{R},  \\
w=Tw=0 & \quad {\rm on} \quad \Gamma_{R}^{\pm}.
\end{array}\right.
\end{equation}
then $w=u_{0}\equiv 0$ in $B_{R}$.
\end{lemma}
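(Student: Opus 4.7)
My plan is to use the diagonal decomposition of $\mathcal{L}$ developed above to convert the elastic transmission system to two scalar Helmholtz transmission problems at the right-angle corner, apply a scalar corner-vanishing result, and then invert the reduction to recover $w$ and $u_0$.

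\textbf{Reduction to scalar equations.} Combining $w=Tw=0$ on $\Gamma_R^\pm$ with Lemma~\ref{lem1} gives $\partial_\nu w=0$; together with the automatic tangential vanishing (since $w\equiv 0$ along each straight segment), this upgrades the boundary data to $\nabla w=0$ on $\Gamma_R^\pm$. Let
\begin{equation*}
\mathcal{Q}:=\left(\begin{array}{cc} -\partial_{x_2} & \partial_{x_1}\\ \partial_{x_1} & \partial_{x_2}\end{array}\right),\qquad \Lambda:=\left(\begin{array}{cc} \mu & 0 \\ 0 & \lambda+2\mu\end{array}\right).
\end{equation*}
By Lemma~\ref{lem3} one has $\mathcal{L}=\mathcal{Q}\Lambda\mathcal{Q}$ and $\mathcal{Q}^2=\Delta I$, which together give the key intertwining $\mathcal{Q}(\mathcal{L}+c I)=(\Delta\Lambda+c I)\mathcal{Q}$ for every constant $c$. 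Setting $\tilde w:=\mathcal{Q}w$, $\tilde u_0:=\mathcal{Q}u_0$ and applying $\mathcal{Q}$ to both equations of~\eqref{eq}, I obtain, for $i=1,2$, the decoupled scalar pairs
\begin{equation*}
(\alpha_i\Delta + q_1\omega^2)\tilde w_i=\omega^2(q_2-q_1)\,\tilde u_{0,i},\qquad (\alpha_i\Delta + q_2\omega^2)\tilde u_{0,i}=0,
\end{equation*}
with $\alpha_1=\mu$, $\alpha_2=\lambda+2\mu$, together with the inherited trace $\tilde w=0$ on $\Gamma_R^\pm$.

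\textbf{Scalar corner argument and back-substitution.} Each pair $(\tilde u_{0,i},\tilde w_i)$ is a scalar Helmholtz interior transmission problem at a right-angle corner with two \emph{distinct} wavenumbers $q_1\omega^2/\alpha_i$ and $q_2\omega^2/\alpha_i$ (distinct because $q_1\neq q_2$). I would invoke a Helmholtz corner-vanishing theorem at the $\pi/2$ angle, in the spirit of those developed in~\cite{BPS2014,EH2015,EH2018}, to conclude $\tilde w_i\equiv 0$ and $\tilde u_{0,i}\equiv 0$ in $B_R$ for $i=1,2$. But $\mathcal{Q}u_0\equiv 0$ means ${\rm div}\,u_0={\rm curl}\,u_0=0$, so $\mathcal{L}u_0=\mu\Delta u_0+(\lambda+\mu)\nabla({\rm div}\,u_0)\equiv 0$, and the second equation of~\eqref{eq} forces $u_0\equiv 0$. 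Likewise $\mathcal{Q}w\equiv 0$ gives $\mathcal{L}w\equiv 0$, and the now homogeneous first equation of~\eqref{eq} forces $w\equiv 0$.

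\textbf{Main difficulty.} The substantive step is the scalar corner vanishing. The Dirichlet trace $\tilde w=0$ is clean, but a short calculation—substituting $w=\nabla w=0$ into $\mathcal{L}w+q_1\omega^2 w=\omega^2(q_2-q_1)u_0$ on $\Gamma_R^\pm$—reveals that $\partial_\nu\tilde w$ equals a nonzero multiple of $u_0\big|_{\Gamma_R^\pm}$, so the matching-Cauchy-data form of the scalar corner theorem is \emph{not} literally at hand. The argument must therefore rely on a right-angle version that uses only the Dirichlet trace of $\tilde w$ on the two perpendicular legs together with the wavenumber mismatch $q_1\neq q_2$; this is precisely where the rectangular-corner hypothesis of the lemma is used in an essential way.
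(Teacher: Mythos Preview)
Your $\mathcal{Q}$-intertwining is correct and attractive: $\mathcal{Q}(\mathcal{L}+cI)=(\Delta\Lambda+cI)\mathcal{Q}$ really does turn the Lam\'e system into two decoupled scalar Helmholtz pairs, and from $w=\nabla w=0$ on $\Gamma_R^\pm$ you honestly obtain the Dirichlet trace $\tilde w=\mathcal{Q}w=0$ there. The back-substitution paragraph is also fine (assuming $q_2\neq 0$, which holds in the applications).

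The fatal gap is exactly where you flag your ``Main difficulty,'' and it cannot be closed in the way you suggest. The scalar corner results in \cite{BPS2014,EH2015,EH2018} all need \emph{both} Cauchy data of the difference to vanish on the corner legs; none of them survives with Dirichlet data alone, and the right-angle hypothesis does not rescue a Dirichlet-only version. In fact that version is simply \emph{false}. With $a^2+b^2=q_2\omega^2/\alpha_i$ set
\[
\tilde u_{0,i}=\sin(ax_1)\sin(bx_2),\qquad \tilde w_i=-\sin(ax_1)\sin(bx_2).
\]
Then $(\alpha_i\Delta+q_2\omega^2)\tilde u_{0,i}=0$, $(\alpha_i\Delta+q_1\omega^2)\tilde w_i=\omega^2(q_2-q_1)\tilde u_{0,i}$, and $\tilde w_i=0$ on both $\{x_1=0\}$ and $\{x_2=0\}$; yet $\tilde w_i\not\equiv 0$. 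So the scalar step you plan to ``invoke'' does not exist, and your two decoupled problems, taken individually, carry too little boundary information to conclude anything.

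What your own computation shows is that $\partial_\nu\tilde w$ on $\Gamma_R^\pm$ is a nonzero multiple of a component of $u_0$ (not of $\tilde u_0=\mathcal{Q}u_0$); hence after applying $\mathcal{Q}$ the missing Neumann information lives outside the scalar unknowns and re-couples the two scalar problems through the original vector field. Exploiting that coupling is precisely the work the paper does: instead of reducing to an off-the-shelf scalar theorem, it runs a direct induction at the vertex $O$, showing $\nabla_x^m w(O)=\nabla_x^m u_0(O)=0$ for all $m$ by combining the boundary relations on the two perpendicular legs with the factorization $\mathcal{L}^2=\Delta P_1(\partial)P_2(\partial)$ (written in the complex tangential operators $\Lambda_1=\partial_{\tau_1}-i\partial_{\tau_2}$, $\Lambda_2=\partial_{\tau_1}+i\partial_{\tau_2}$). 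The right-angle hypothesis enters there, in the explicit linear-algebraic cancellations at each order, not as a shortcut permitting a Dirichlet-only scalar lemma.
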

\begin{proof}
Let $\tau_{1}$, $\nu_{1}$ ($\tau_{2}$, $\nu_{2}$) denote the unit tangential and normal vectors on $\Gamma_{R}^{\pm}$ directed into $B_{R}\backslash\overline{\Sigma_{R}}$, then $\nu_{1}=(0,-1)^{\top}$, $\tau_{1}=(1,0)^{\top}$, $\nu_{2}=(-1,0)^{\top}$, $\tau_{2}=(0,1)^{\top}$. It is easy to obtain that $\partial_{x_{1}}=\partial_{\tau_{1}} =-\partial_{\nu_{2}}$, $\partial_{x_{2}}=\partial_{\tau_{2}}=-\partial_{\nu_{1}}$, then we conclude that for all $m\in\mathbb{N}_{0}$,
\begin{equation} \label{eq5}
\nabla_{x}^{m}\subset{\rm span}\{\partial_{\tau_{1}}^{m},\partial_{\tau_{1}}^{m-1}\partial_{\tau_{2}}, \partial_{\tau_{1}}^{m-2}\partial_{\tau_{2}}^{2}, \cdots, \partial_{\tau_{1}}^{2}\partial_{\tau_{2}}^{m-2}, \partial_{\tau_{1}}\partial_{\tau_{2}}^{m-1},\partial_{\tau_{2}}^{m}\}.
\end{equation}

It can also be claimed that for all $m\in\mathbb{N}_{0}$, $m\geq2$,
\begin{equation} \label{eq6}
\nabla_{x}^{m-2}\mathcal{L}w\subset{\rm span}\{ \nabla_{x}^{m}w \}.
\end{equation}
Next, we prove by induction that $\nabla_{x}^{m}u_{0}=0$ for all $m\in\mathbb{N}_{0}$.

{\bf Step 1.} We show that
\begin{equation} \label{eq7}
w=0, \quad \nabla_{x}w=0 \quad {\rm at}\quad O.
\end{equation}
The result $w(O)=0$ follows immediately from $w=0$ on $\Gamma_{R}^{\pm}$. Applying Lemma \ref{lem1} and the boundary condition (\ref{eq}) we
get
\begin{equation}  \label{eq8}
\partial_{\nu_{1}}w=\partial_{\nu_{2}}w=0 \quad {\rm on} \quad \Gamma_{R}^{\pm}.
\end{equation}
Therefore, $\nabla_{x}w(O)=0$.

{\bf Step 2.} We show that
\begin{equation*}
\nabla_{x}^{2}w=0, \quad u_{0}=0,\quad \mathcal{L}u_{0}=0,
\quad \mathcal{L}^{2}w=0 \quad {\rm at}\quad O.
\end{equation*}

From the boundary condition (\ref{eq8}) we see that
\begin{equation*}
\partial_{\tau_{1}}^{2}w=\partial_{\tau_{1}}\partial_{\nu_{1}}w=0, \quad \partial_{\tau_{2}}^{2}w=\partial_{\tau_{2}}\partial_{\nu_{2}}w=0, \quad
\partial_{\tau_{1}}\partial_{\tau_{2}}w=-\partial_{\tau_{1}}\partial_{\nu_{1}}w=0,
\end{equation*}
implying that $\nabla_{x}^{2}w=0$ at $O$ due to the relation (\ref{eq5}). Then $\nabla_{x}^{2}w(O)=0$. We obtain from (\ref{eq}),(\ref{eq6}) and (\ref{eq7}) that $u_{0}(O)=0$. Using (\ref{eq}) yields $\mathcal{L}u_{0}(O)=0$. Applying this condition into (\ref{eq}) we get $\mathcal{L}^{2}w(O)=0$.

{\bf Step 3.} We show that
\begin{equation*}
\nabla_{x}^{3}w=0, \quad \nabla_{x}u_{0}=0, \quad \nabla_{x}\mathcal{L}u_{0}=0, \quad \nabla_{x}\mathcal{L}^{2}w=0 \quad {\rm at } \quad O.
\end{equation*}

We observe that
\begin{align*}
&\partial_{\tau_{1}}^{3}w=\partial_{\tau_{1}}^{2}\partial_{\nu_{1}}w=0, \quad \partial_{\tau_{2}}^{3}w=\partial_{\tau_{2}}^{2}\partial_{\nu_{2}}w=0, \\
&\partial_{\tau_{1}}^{2}\partial_{\tau_{2}}w=-\partial_{\tau_{1}}^{2}\partial_{\nu_{1}}w=0, \quad \partial_{\tau_{1}}\partial_{\tau_{2}}^{2}w=\partial_{\tau_{1}}\partial_{\nu_{1}}^{2}w=0.
\end{align*}
Then the relation (\ref{eq5}) gives $\nabla_{x}^{3}w=0$ at $O$. Applying $\nabla_{x}$ to both sides of (\ref{eq}) gives $\nabla_{x}u_{0}(O)=0$ and $\nabla_{x}\mathcal{L}u_{0}(O)=0$. Going back to (\ref{eq}) and applying $\nabla_{x}\mathcal{L}$ to its both sides we have $\nabla_{x}\mathcal{L}^{2}w(O)=0$.

{\bf Step 4.} We show that
\begin{equation} \label{eq11}
\nabla_{x}^{4}w=0, \quad \nabla_{x}^{2}u_{0}=0, \quad \nabla_{x}^{2}\mathcal{L}u_{0}=0, \quad \nabla_{x}^{2}\mathcal{L}^{2}w=0 \quad {\rm at } \quad O.
\end{equation}

We only need to show $\nabla_{x}^{4}w=0$ at $O$ and the conclusion (\ref{eq11}) can be obtained as in the previous step. Obviously, we can get
\begin{equation*}
\partial_{\tau_{1}}^{4}w=\partial_{\tau_{1}}^{3}\partial_{\tau_{2}}w=\partial_{\tau_{1}} \partial_{\tau_{2}}^{3}w=\partial_{\tau_{2}}^{4}w=0 \quad {\rm at} \quad O.
\end{equation*}
Hence it suffices to prove $\partial_{\tau_{1}}^{2}\partial_{\tau_{2}}^{2}w=0$ at $O$.

Going back to the decomposition form of $\mathcal{L}^{2}=\Delta P_{1}(\partial) P_{2}(\partial)$, we have
\begin{equation*}
P_{1}(\partial)=\left(\begin{array}{cc}
-\partial_{\tau_{2}} & \partial_{\tau_{1}} \\
\partial_{\tau_{1}} & \partial_{\tau_{2}}
\end{array}\right)\left(\begin{array}{cc}
\mu & 0 \\  0 & \lambda+2\mu
\end{array}\right)
=\partial_{\tau_{1}}\left(\begin{array}{cc}
0 & \lambda+2\mu \\  \mu & 0
\end{array}\right)-\,\partial_{\tau_{2}}\left(\begin{array}{cc}
\mu & 0 \\  0 & -(\lambda+2\mu)
\end{array}\right),
\end{equation*}
\begin{equation*}
P_{2}(\partial)=\left(\begin{array}{cc}
\mu & 0 \\   0 & \lambda+2\mu
\end{array}\right)\left(\begin{array}{cc}
-\partial_{\tau_{2}} & \partial_{\tau_{1}} \\
\partial_{\tau_{1}} & \partial_{\tau_{2}}
\end{array}\right)
=\partial_{\tau_{1}}\left(\begin{array}{cc}
0 & \mu \\   \lambda+2\mu & 0
\end{array}\right)-\,\partial_{\tau_{2}}\left(\begin{array}{cc}
\mu & 0 \\   0 & -(\lambda+2\mu)
\end{array}\right).
\end{equation*}
Denote $\Lambda_1(\partial):=\partial_{\tau_1}-i\partial_{\tau_2}$, $\Lambda_2(\partial):= \partial_{\tau_1}+i\partial_{\tau_2}$. Then it is easy to check that
\begin{equation*}
\partial_{\tau_1}=\frac{1}{2}\big[\Lambda_1(\partial)+\Lambda_2(\partial)\big], \quad
\partial_{\tau_2}=\frac{i}{2}\big[\Lambda_1(\partial)-\Lambda_2(\partial)\big], \quad
\Delta=\Lambda_1(\partial)\Lambda_2(\partial),
\end{equation*}
and
\begin{align*}
P_{1}(\partial)=&\frac{\Lambda_1(\partial)}{2}\left(\begin{array}{cc}
-i\mu & \lambda+2\mu \\
\mu & i(\lambda+2\mu)
\end{array}\right)
+\frac{\Lambda_2(\partial)}{2}\left(\begin{array}{cc}
i\mu & \lambda+2\mu \\
\mu & -i(\lambda+2\mu)
\end{array}\right) \\
=&\frac{1}{2}\Big[\,\Lambda_{1}(\partial)B+\Lambda_{2}(\partial)\overline{B}\,\Big],
\end{align*}
\begin{align*}
P_{2}(\partial)=&\frac{\Lambda_1(\partial)}{2}\left(\begin{array}{cc}
-i\mu & \mu \\
\lambda+2\mu & i(\lambda+2\mu)
\end{array}\right)
+\frac{\Lambda_2(\partial)}{2}\left(\begin{array}{cc}
i\mu & \mu \\
\lambda+2\mu & -i(\lambda+2\mu)
\end{array}\right)\\
=&\frac{1}{2}\Big[\,\Lambda_{1}(\partial)B^{T}+\Lambda_{2}(\partial)\overline{B}^{T}\,\Big],
\end{align*}
where $\overline{B}$ is the conjugate matrix, $B^{T}$ is the transpose matrixes of $B$ defined by
\begin{equation*}
B:=\left(\begin{array}{cc}
-i\mu & \lambda+2\mu \\
\mu & i(\lambda+2\mu)
\end{array}\right).
\end{equation*}
Then we conclude that for all $m\in\mathbb{N}_{0}$,
\begin{equation} \label{c}
\nabla_{x}^{m}\subset{\rm span}\{\Lambda_{1}^{m}(\partial),\Lambda_{1}^{m-1}(\partial)\Lambda_{2}(\partial), \Lambda_{1}^{m-2}(\partial)\Lambda_{2}^{2}(\partial), \cdots, \Lambda_{1}(\partial)\Lambda_{2}^{m-1}(\partial),\Lambda_{2}^{m}(\partial)\},
\end{equation}
and
\begin{equation*}
\mathcal{L}^{2}=\frac{1}{4}\,\Lambda_1(\partial)\,\Lambda_2(\partial)
\big[\,\Lambda_{1}^{2}(\partial)BB^{T}+\Lambda_{1}(\partial)\Lambda_{2}(\partial) (B\overline{B}^{T}+\overline{B}B^{T})
+\Lambda_{2}^{2}(\partial)\overline{B}\,\overline{B}^{T}\,\big],
\end{equation*}
with $a:=(\lambda+2\mu)^{2}$, $b:=\mu^{2}$,
\begin{equation*}
BB^{T}=(a-b)\left(\begin{array}{cc}
1 & i \\ i & -1
\end{array}\right), \quad  \overline{B}\,\overline{B}^{T}=(a-b)\left(\begin{array}{cc}
1 & -i \\ -i & -1
\end{array}\right), \quad B\overline{B}^{T}+\overline{B}B^{T}=2(a+b)I.
\end{equation*}

In Step 2, $\mathcal{L}^{2}w(O)=0$ implies that
\begin{equation*}
\Lambda_{1}^{3}(\partial)\Lambda_2(\partial)BB^{T}w(O) +\Lambda_{1}^{2}(\partial)\Lambda_{2}^{2}(\partial)(B\overline{B}^{T}+\overline{B}B^{T})w(O)
+\Lambda_1(\partial)\Lambda_{2}^{3}(\partial)\overline{B}\,\overline{B}^{T}w(O)=0.
\end{equation*}
That is
\begin{equation*}
(a-b)\left[ \Lambda_{1}^{3}(\partial)\,\Lambda_2(\partial)(w_{1}+i w_{2}) +\Lambda_1(\partial)\,\Lambda_{2}^{3}(\partial)(w_{1}-iw_{2})\right]
+2(a+b) \Lambda_{1}^{2}(\partial)\,\Lambda_{2}^{2}(\partial)w_{1}=0  \,{\rm at}\, O,
\end{equation*}
\begin{equation*}
(a-b)\left[\Lambda_{1}^{3}(\partial)\,\Lambda_2(\partial)(iw_{1}-w_{2}) -\Lambda_1(\partial)\,\Lambda_{2}^{3}(\partial)(iw_{1}+w_{2})\right]
+2(a+b) \Lambda_{1}^{2}(\partial)\,\Lambda_{2}^{2}(\partial)w_{2}=0  \,{\rm at}\, O,
\end{equation*}
where $w:=(w_{1},w_{2})^{\top}$. Simple calculation yields
\begin{equation*}
(a-b)\left[\Lambda_1(\partial)\,\Lambda_{2}^{3}(\partial)(w_{1}-iw_{2})\right]
+(a+b) \Lambda_{1}^{2}(\partial)\,\Lambda_{2}^{2}(\partial)(w_{1}+iw_{2})=0\quad\,{\rm at} \quad O,
\end{equation*}
\begin{equation*}
(a-b)\left[\Lambda_{1}^{3}(\partial)\,\Lambda_2(\partial)(w_{1}+iw_{2})\right]
+(a+b)\Lambda_{1}^{2}(\partial)\,\Lambda_{2}^{2}(\partial)(w_{1}-iw_{2})=0\quad \,{\rm at} \quad O.
\end{equation*}
Set $W_{1}:=w_{1}-iw_{2}$, $W_{2}:=w_{1}+iw_{2}$, that is
\begin{equation} \label{req1}
(a-b)\Lambda_1(\partial)\,\Lambda_{2}^{3}(\partial)W_{1}
+(a+b) \Lambda_{1}^{2}(\partial)\,\Lambda_{2}^{2}(\partial)W_{2}=0 \quad \,{\rm at} \quad O,
\end{equation}
\begin{equation} \label{req2}
(a+b)\Lambda_{1}^{2}(\partial)\,\Lambda_{2}^{2}(\partial)W_{1} +(a-b)\Lambda_{1}^{3}(\partial)\,\Lambda_2(\partial)W_{2}=0 \quad \,{\rm at} \quad O.
\end{equation}

In Step 4, the conclusion $\partial_{\tau_{1}}^{4}w=\partial_{\tau_{1}}^{3}\partial_{\tau_{2}}w =\partial_{\tau_{1}}\partial_{\tau_{2}}^{3}w=\partial_{\tau_{2}}^{4}w=0$ at $O$ implies that
\begin{equation*}
\big(\Lambda_1(\partial)+\Lambda_2(\partial)\big)^{4}w(O)= \big(\Lambda_1(\partial)+\Lambda_2(\partial)\big)^{3} \big(\Lambda_1(\partial)-\Lambda_2(\partial)\big)w(O)=0,
\end{equation*}
\begin{equation*}
\big(\Lambda_1(\partial)+\Lambda_2(\partial)\big) \big(\Lambda_1(\partial)-\Lambda_2(\partial)\big)^{3}w(O) =\big(\Lambda_1(\partial)-\Lambda_2(\partial)\big)^{4}w(O)=0.
\end{equation*}
That is,
\begin{equation*}
\big(\Lambda_1(\partial)+\Lambda_2(\partial)\big)^{3}\Lambda_1(\partial)W_{t}(O)= \big(\Lambda_1(\partial)+\Lambda_2(\partial)\big)^{3}\Lambda_2(\partial)W_{t}(O)=0,
\end{equation*}
\begin{equation*}
\big(\Lambda_1(\partial)-\Lambda_2(\partial)\big)^{3}\Lambda_1(\partial)W_{t}(O) =\big(\Lambda_1(\partial)-\Lambda_2(\partial)\big)^{3}\Lambda_2(\partial)W_{t}(O)=0.
\end{equation*}
Transform the above equations into a $4\times4$ system
\begin{equation*}
\left(\begin{array}{c} \Lambda^{4}_1(\partial) \\ \Lambda^{3}_1(\partial)\Lambda_2(\partial) \\ \Lambda_1(\partial)\Lambda^{3}_2(\partial) \\  \Lambda^{4}_2(\partial) \end{array}\right) W_{t}(O)= \left(\begin{array}{cccc}
8 & 0 & 8 & 0  \\    -3 & -1 & 3 & -1  \\    1 & 3 & -1 & 3  \\  0 & -8 & 0 & 8
\end{array}\right) \left(\begin{array}{c} 3 \\ -3 \\ 3 \\ 3  \end{array}\right) \frac{\Lambda^{2}_1(\partial)\Lambda^{2}_2(\partial)}{-16}W_{t}(O), \quad t=1,2,
\end{equation*}
we obtain that
\begin{equation*}
\Lambda^{3}_1(\partial)\Lambda_2(\partial)W_{t}(O)=0, \quad \quad
\Lambda_1(\partial) \Lambda^{3}_2(\partial)W_{t}(O)=0.
\end{equation*}
Combining with the relations in \eqref{req1} and \eqref{req2}, we achieve that $\Lambda^{2}_{1} (\partial)\Lambda_{2}^{2}(\partial)W_{t}(O)=0$ ($t=1,2$). Consequently,
\begin{equation*}
\Lambda^{4}_1(\partial)w=\Lambda^{3}_1(\partial)\Lambda_2(\partial)w=\Lambda^{2}_{1} (\partial)\Lambda_{2}^{2}(\partial)=\Lambda_1(\partial) \Lambda^{3}_2(\partial)w= \Lambda^{4}_2(\partial)w=0 \quad \,{\rm at} \quad O.
\end{equation*}
Then the inclusion relation \eqref{c} gives that $\nabla_{x}^{4}w=0$ at $O$.

{\bf Step 5.} We show that
\begin{equation} \label{eq12}
\nabla_{x}^{5}w=0, \quad \nabla_{x}^{3}u_{0}=0, \quad \nabla_{x}^{3}\mathcal{L}u_{0}=0, \quad \nabla_{x}^{3}\mathcal{L}^{2}w=0 \quad {\rm at } \quad O.
\end{equation}

We only need to show $\nabla_{x}^{5}w=0$ at $O$ and the conclusion (\ref{eq12}) can be obtained as in the previous step. By the relation \eqref{c} and the definitions of $W_{t}$, we need to show that
\begin{equation*}
\Lambda_{1}^{m}(\partial)\Lambda_{2}^{5-m}(\partial)W_{t}= 0\quad (m=0,1,2,3,4,5; ~~ t=1,2) \quad {\rm at} \quad O.
\end{equation*}
Obviously, we can get from Step 4 that
\begin{equation*}
\partial_{\tau_{1}}^{5}w=\partial_{\tau_{1}}^{4}\partial_{\tau_{2}}w=\partial_{\tau_{1}} \partial_{\tau_{2}}^{4}w=\partial_{\tau_{2}}^{5}w=0 \quad {\rm at} \quad O,
\end{equation*}
which implies that
\begin{equation} \label{d}
\left(\begin{array}{c} \Lambda^{5}_1(\partial) \\ \Lambda^{4}_1(\partial)\Lambda_2(\partial) \\ \Lambda_1(\partial)\Lambda^{4}_2(\partial) \\  \Lambda^{5}_2(\partial) \end{array}\right) W_{t}= \sum_{j=2}^{3}\left(\begin{array}{cccc}
4 & 1 & 4 & -1  \\  -1 & 0 & 1 & 0  \\  0 & -1 & 0 & 1  \\  1 & 4 & -1 & 4
\end{array}\right) \left(\begin{array}{c}
C_{4}^{5-j}(-1)^{5-j} \\ C_{4}^{4-j}(-1)^{4-j} \\ C_{4}^{5-j} \\ C_{4}^{4-j} \end{array}\right) \frac{\Lambda^{j}_1(\partial)\Lambda^{5-j}_2(\partial)}{-8}W_{t}.
\end{equation}
Consequently, it is sufficient to prove that $\Lambda_{1}^{2}(\partial)\Lambda_{2}^{3}(\partial)W_{t} =\Lambda_{1}^{3}(\partial)\Lambda_{2}^{2}(\partial)W_{t}=0$ at $O$.
%Obviously, we can get
%\begin{equation*}
%\partial_{\tau_{1}}^{5}w=\partial_{\tau_{1}}^{4}\partial_{\tau_{2}}w=\partial_{\tau_{1}} \partial_{\tau_{2}}^{4}w=\partial_{\tau_{2}}^{5}w=0 \quad {\rm at} \quad O.
%\end{equation*}
%Hence it suffices to prove $\partial_{\tau_{1}}^{3}\partial_{\tau_{2}}^{2}w= \partial_{\tau_{1}}^{2}\partial_{\tau_{2}}^{3}w=0$ at $O$.

In step 3, $\nabla_{x}\mathcal{L}^{2}w(O)=0$ implies that $\partial_{\tau_{1}} \mathcal{L}^{2}w(O)=0$, $\partial_{\tau_{2}}\mathcal{L}^{2}w(O)=0$ and then
\begin{equation*}
\Lambda_{1}(\partial)\mathcal{L}^{2}w(O)=0,\quad\Lambda_{2}(\partial)\mathcal{L}^{2}w(O)=0.
\end{equation*}
Recalling (\ref{req1}) and (\ref{req2}), the above relations tell us that
\begin{equation} \label{req1-4}
(a-b)\Lambda_{1}^{2}(\partial)\,\Lambda_{2}^{3}(\partial)W_{1}(O)
+(a+b) \Lambda_{1}^{3}(\partial)\,\Lambda_{2}^{2}(\partial)W_{2}(O)=0,
\end{equation}
\begin{equation} \label{req2-4}
(a-b)\Lambda_{1}(\partial)\,\Lambda_{2}^{4}(\partial)W_{1}(O)
+(a+b) \Lambda_{1}^{2}(\partial)\,\Lambda_{2}^{3}(\partial)W_{2}(O)=0,
\end{equation}
\begin{equation} \label{req3-4}
(a+b)\Lambda_{1}^{3}(\partial)\,\Lambda_{2}^{2}(\partial)W_{1}(O) +(a-b)\Lambda_{1}^{4}(\partial)\,\Lambda_{2}(\partial)W_{2}(O)=0,
\end{equation}
\begin{equation} \label{req4-4}
(a+b)\Lambda_{1}^{2}(\partial)\,\Lambda_{2}^{3}(\partial)W_{1}(O) +(a-b)\Lambda_{1}^{3}(\partial)\,\Lambda_{2}^{2}(\partial)W_{2}(O)=0.
\end{equation}
Transform \eqref{req1-4} with \eqref{req4-4} into a 2-by-2 system yields,
\begin{equation*}
\left|\begin{array}{cc}
a-b & a+b  \\    a+b & a-b \end{array}\right|=-4ab\neq0, \quad {\rm then}  \quad
\left\{\begin{array}{l}
\Lambda_{1}^{2}(\partial)\,\Lambda_{2}^{3}(\partial)W_{1}(O)=0 , \\
\Lambda_{1}^{3}(\partial)\,\Lambda_{2}^{2}(\partial)W_{2}(O)=0.
\end{array}\right.
\end{equation*}
%In Step 5, the relations $\partial_{\tau_{1}}^{5}w=\partial_{\tau_{1}}^{4}\partial_{\tau_{2}}w =\partial_{\tau_{1}}\partial_{\tau_{2}}^{4}w=\partial_{\tau_{2}}^{5}w=0$ at $O$ implies that
By the representation system \eqref{d}, we have
\begin{equation*}
\Lambda^{4}_1(\partial)\Lambda_2(\partial)W_{t}(O) =-\Lambda^{2}_1(\partial)\Lambda_{2}^{3}(\partial)W_{t}(O), \quad \quad
\Lambda_{1}(\partial)\Lambda^{4}_2(\partial)W_{t}(O) =-\Lambda^{3}_1(\partial)\Lambda_{2}^{2}(\partial)W_{t}(O).
\end{equation*}
Insert the relations into \eqref{req2-4} and \eqref{req3-4} yields
\begin{equation*}
\left\{\begin{array}{c}
-(a-b)\Lambda^{3}_1(\partial)\Lambda_{2}^{2}(\partial)W_{1}(O)
+(a+b) \Lambda_{1}^{2}(\partial)\,\Lambda_{2}^{3}(\partial)W_{2}(O)=0, \\
(a+b)\Lambda_{1}^{3}(\partial)\,\Lambda_{2}^{2}(\partial)W_{1}(O) -(a-b)\Lambda^{2}_1(\partial)\Lambda_{2}^{3}(\partial)W_{2}(O)=0.
\end{array}\right.
\end{equation*}
That is,
\begin{equation*}
\Lambda^{3}_1(\partial)\Lambda_{2}^{2}(\partial)W_{1}(O) =\Lambda^{2}_1(\partial)\Lambda_{2}^{3}(\partial)W_{2}(O)=0.
\end{equation*}

{\bf Step 6.} We show that
\begin{equation*}
\nabla_{x}^{6}w=0, \quad \nabla_{x}^{4}u_{2}=0, \quad \nabla_{x}^{4}\mathcal{L}u_{2}=0, \quad \nabla_{x}^{4}\mathcal{L}^{2}w=0 \quad {\rm at } \quad O.
\end{equation*}
Similarly, we only need to show $\nabla_{x}^{6}w=0$ at $O$, which is equivalent to
\begin{equation*}
\Lambda_{1}^{m}(\partial)\Lambda_{2}^{6-m}(\partial)W_{t}= 0\quad (m=0,1,2,3,4,5,6; ~~ t=1,2) \quad {\rm at} \quad O.
\end{equation*}
Since the relations $\partial_{\tau_{1}}^{6}w=\partial_{\tau_{1}}^{5}\partial_{\tau_{2}}w =\partial_{\tau_{1}}\partial_{\tau_{2}}^{5}w=\partial_{\tau_{2}}^{6}w=0$ at $O$ obtained from Step 5 implies that
\begin{equation} \label{e}
\left(\begin{array}{c} \Lambda^{6}_1(\partial) \\ \Lambda^{5}_1(\partial)\Lambda_2(\partial) \\ \Lambda_1(\partial)\Lambda^{5}_2(\partial) \\  \Lambda^{6}_2(\partial)\end{array}\right)W_{t}= \sum_{j=2}^{4}\left(\begin{array}{cccc}
24 & 0 & 24 & 0  \\  -5 & -1 & 5 & -1  \\ 1 & 5 & -1 & 5  \\ 0 & -24 & 0 & 24
\end{array}\right) \left(\begin{array}{c}
C_{5}^{6-j}(-1)^{6-j} \\ C_{5}^{5-j}(-1)^{5-j} \\ C_{5}^{6-j} \\ C_{5}^{5-j} \end{array}\right) \frac{\Lambda^{j}_1(\partial)\Lambda^{6-j}_2(\partial)}{-48}W_{t},
\end{equation}
it suffices to prove that $\Lambda^{j}_1(\partial)\Lambda^{6-j}_2(\partial)W_{t}=0$ ($j=2,3,4; \, t=1,2$) at $O$.

In step 4, $\nabla^{2}_{x}\mathcal{L}^{2}w(O)=0$ implies that $\nabla^{2}_{\tau} \mathcal{L}^{2}w(O)=0$ and then
\begin{equation*}
\Lambda^{2}_{1}(\partial)\mathcal{L}^{2}w(O)=0,\quad \Lambda^{2}_{2}(\partial)\mathcal{L}^{2}w(O)=0, \quad \Lambda_{1}(\partial)\Lambda_{2}(\partial)\mathcal{L}^{2}w(O)=0.
\end{equation*}
From (\ref{req1-4}) to (\ref{req4-4}), we have
\begin{equation} \label{req1-5}
(a-b)\Lambda_{1}^{3}(\partial)\,\Lambda_{2}^{3}(\partial)W_{1}
+(a+b) \Lambda_{1}^{4}(\partial)\,\Lambda_{2}^{2}(\partial)W_{2}=0,
\end{equation}
\begin{equation} \label{req2-5}
(a-b)\Lambda_{1}^{2}(\partial)\,\Lambda_{2}^{4}(\partial)W_{1}
+(a+b) \Lambda_{1}^{3}(\partial)\,\Lambda_{2}^{3}(\partial)W_{2}=0,
\end{equation}
\begin{equation} \label{req4-5}
(a-b)\Lambda_{1}(\partial)\,\Lambda_{2}^{5}(\partial)W_{1}
+(a+b) \Lambda_{1}^{2}(\partial)\,\Lambda_{2}^{4}(\partial)W_{2}=0,
\end{equation}
\begin{equation} \label{req5-5}
(a+b)\Lambda_{1}^{4}(\partial)\,\Lambda_{2}^{2}(\partial)W_{1} +(a-b)\Lambda_{1}^{5}(\partial)\,\Lambda_{2}(\partial)W_{2}=0,
\end{equation}
\begin{equation} \label{req6-5}
(a+b)\Lambda_{1}^{3}(\partial)\,\Lambda_{2}^{3}(\partial)W_{1} +(a-b)\Lambda_{1}^{4}(\partial)\,\Lambda_{2}^{2}(\partial)W_{2}=0,
\end{equation}
\begin{equation} \label{req8-5}
(a+b)\Lambda_{1}^{2}(\partial)\,\Lambda_{2}^{4}(\partial)W_{1} +(a-b)\Lambda_{1}^{3}(\partial)\,\Lambda_{2}^{3}(\partial)W_{2}=0.
\end{equation}
Combining \eqref{req1-5} with \eqref{req6-5}, \eqref{req2-5} with \eqref{req8-5} yields
\begin{equation*}
\Lambda_{1}^{3}(\partial)\,\Lambda_{2}^{3}(\partial)W_{1} =\Lambda_{1}^{4}(\partial)\,\Lambda_{2}^{2}(\partial)W_{2}=0 \quad {\rm and} \quad
\Lambda_{1}^{2}(\partial)\,\Lambda_{2}^{4}(\partial)W_{1}= \Lambda_{1}^{3}(\partial)\,\Lambda_{2}^{3}(\partial)W_{2}=0.
\end{equation*}
Recall the representation system \eqref{e}, we have
\begin{equation*}
\Lambda^{5}_1(\partial)\Lambda_2(\partial)W_{t} =-\frac{5}{3}\Lambda^{3}_1(\partial)\Lambda_{2}^{3}(\partial)W_{t}=0, \quad \quad
\Lambda_{1}(\partial)\Lambda^{5}_2(\partial)W_{t} =-\frac{5}{3}\Lambda^{3}_1(\partial)\Lambda_{2}^{3}(\partial)W_{t}=0.
\end{equation*}
Insert the above results into \eqref{req4-5} and \eqref{req5-5}, we conclude that
\begin{equation*}
\Lambda_{1}^{4}(\partial)\,\Lambda_{2}^{2}(\partial)W_{1} =\Lambda_{1}^{2}(\partial)\,\Lambda_{2}^{4}(\partial)W_{2}=0.
\end{equation*}

{\bf Step 7.} For $m\geq 6$, we make the induction hypothesis that for $0\leq k\leq m$,
\begin{equation*}
\nabla_x^k w=0, \quad \nabla_x^{k-2} u_0=0, \quad \nabla_x^{k-2} \mathcal{L}u_0=0, \quad
\nabla_x^{k-2} \mathcal{L}^2 w=0 \quad {\rm at} \quad O.
\end{equation*}
We show that
\begin{equation*}
\nabla_x^{m+1} w=0, \quad \nabla_x^{m-1} u_0=0, \quad \nabla_x^{m-1} \mathcal{L}u_0=0, \quad
\nabla_x^{m-1} \mathcal{L}^2 w=0 \quad {\rm at} \quad O.
\end{equation*}

From $\nabla_x^{m+1} w(O)=0$ and (\ref{eq}) we can get $\nabla_x^{m-1} u_0(O)=0$. Then using (\ref{eq}) yields $\nabla_x^{m-1} \mathcal{L}u_0(O)=0$, $\nabla_x^{m-1}\mathcal{L}^2 w(O)=0$. Thus, we only need to verify that $\nabla_x^{m+1}w(O)=0$, i.e.,
\begin{equation*}
\partial_{\tau_1}^{m+1}w=\partial_{\tau_1}^{m} \partial_{\tau_2}w= \cdots =\partial_{\tau_1} \partial_{\tau_2}^{m}w=\partial_{\tau_2}^{m+1}w=0 \quad {\rm at} \quad O,
\end{equation*}
which is equivalent to
\begin{equation*}
\Lambda_{1}^{m+1}(\partial)W_{t}=\Lambda_{1}^{m}(\partial)\Lambda_{2}(\partial)W_{t}= \cdots=\Lambda_{1}(\partial) \Lambda_{2}^{m}(\partial)W_{t}=\Lambda_{2}^{m+1}(\partial)W_{t}=0 \,~(t=1,2) \quad {\rm at} \quad O.
\end{equation*}
Obviously, we can get
\begin{equation*}
\partial_{\tau_{1}}^{m+1}w=\partial_{\tau_{1}}^{m}\partial_{\tau_{2}}w=\partial_{\tau_{1}} \partial_{\tau_{2}}^{m}w=\partial_{\tau_{2}}^{m+1}w=0 \quad {\rm at} \quad O,
\end{equation*}
%Hence it suffices to prove $\partial_{\tau_{1}}^{m-1}\partial_{\tau_{2}}^{2}w= \partial_{\tau_{1}}^{m-2}\partial_{\tau_{2}}^{3}w=\cdots= \partial_{\tau_{1}}^{3}\partial_{\tau_{2}}^{m-2}w= \partial_{\tau_{1}}^{2}\partial_{\tau_{2}}^{m-1}w=0$ at $O$.
%\begin{equation*}
%\big[\Lambda_1(\partial)+\Lambda_2(\partial)\big]^{m}\Lambda_1(\partial)w =\big[\Lambda_1(\partial)+\Lambda_2(\partial)\big]^{m}\Lambda_2(\partial)w=0 \quad {\rm at} \quad O,
%\end{equation*}
%\begin{equation*}
%\big[\Lambda_1(\partial)-\Lambda_2(\partial)\big]^{m}\Lambda_1(\partial)w =\big[\Lambda_1(\partial)-\Lambda_2(\partial)\big]^{m}\Lambda_2(\partial)w=0 \quad {\rm at} \quad O.
%\end{equation*}
which implies that
\begin{equation} \label{W1}
\big[\Lambda_1(\partial)+\Lambda_2(\partial)\big]^{m}\Lambda_1(\partial)W_{t}  =\big[\Lambda_1(\partial)+\Lambda_2(\partial)\big]^{m}\Lambda_2(\partial)W_{t} =0 \quad {\rm at} \quad O,
\end{equation}
\begin{equation} \label{W2}
\big[\Lambda_1(\partial)-\Lambda_2(\partial)\big]^{m}\Lambda_1(\partial)W_{t}  =\big[\Lambda_1(\partial)-\Lambda_2(\partial)\big]^{m}\Lambda_2(\partial)W_{t} =0 \quad {\rm at} \quad O.
\end{equation}
For the case that $m$ is an even number, relations (\ref{W1}) and (\ref{W2}) give that
\begin{equation*}
\left(\begin{array}{c} \Lambda^{m+1}_1(\partial) \\ \Lambda^{m}_1(\partial)\Lambda_2(\partial) \\ \Lambda_1(\partial)\Lambda^{m}_2(\partial) \\  \Lambda^{m+1}_2(\partial) \end{array}\right)W_{t}(O)= \sum_{j=2}^{m-1}\left(\begin{array}{cccc}
m & 1 & m & -1  \\  -1 & 0 & 1 & 0  \\ 0 & -1 & 0 & 1  \\ 1 & m & -1 & m
\end{array}\right) \alpha(m,j) \,
\frac{\Lambda^{j}_{1}(\partial)\Lambda^{m+1-j}_2(\partial)}{-2m}W_{t}(O),
\end{equation*}
with
\begin{equation*}
\alpha(m,j):=\left(\begin{array}{llll}
C_{m}^{m+1-j}(-1)^{m+1-j}, & C_{m}^{m-j}(-1)^{m-j}, & C_{m}^{m+1-j}, & C_{m}^{m-j} \end{array}\right)^{\top},
\end{equation*}
and especially
\begin{equation*}
\Lambda^{m}_1(\partial)\Lambda_2(\partial)W_{t}(O)=-\frac{1}{2m}\sum_{j=2}^{m-1} \Big[C_{m}^{m+1-j}+(-1)^{m-j}C_{m}^{m+1-j} \Big]\Lambda^{j}_{1}(\partial)\Lambda^{m+1-j}_2(\partial)W_{t}(O),
\end{equation*}
\begin{equation*}
\Lambda_1(\partial)\Lambda_2^{m}(\partial)W_{t}(O)=-\frac{1}{2m}\sum_{j=2}^{m-1} \Big[C_{m}^{m-j}-(-1)^{m-j}C_{m}^{m-j} \Big]\Lambda^{j}_{1}(\partial)\Lambda^{m+1-j}_2(\partial)W_{t}(O).
\end{equation*}
For the case that $m$ is an odd number, relations (\ref{W1}) and (\ref{W2}) give that
\begin{equation*}
\left(\begin{array}{c} \Lambda^{m+1}_1(\partial) \\ \Lambda^{m}_1(\partial)\Lambda_2(\partial) \\ \Lambda_1(\partial)\Lambda^{m}_2(\partial) \\  \Lambda^{m+1}_2(\partial) \end{array}\right)W_{t}(O)= \sum_{j=2}^{m-1} Q(m)\, \alpha(m,j) \, \frac{\Lambda^{j}_{1}(\partial)\Lambda^{m+1-j}_2(\partial)}{-2(m^{2}-1)}W_{t}(O),
\end{equation*}
with
\begin{equation*}
Q(m):=\left(\begin{array}{cccc}
m^{2}-1 & 0 & m^{2}-1 & 0 \\  -m & -1 & m & -1  \\ 1 & m & -1 & m \\ 0 & 1-m^{2} & 0 & m^{2}-1
\end{array}\right),
\end{equation*}
and especially
\begin{align*}
\Lambda^{m}_1(\partial)\Lambda_2(\partial)W_{t}(O)=-\frac{1}{2(m^{2}-1)}\sum_{j=2}^{m-1} &\Big[mC_{m}^{m+1-j}+(-1)^{m-j}mC_{m}^{m+1-j}  \\
&-C_{m}^{m-j}-(-1)^{m-j}C_{m}^{m-j} \Big] \Lambda^{j}_{1}(\partial)\Lambda^{m+1-j}_2(\partial)W_{t}(O),
\end{align*}
\begin{align*}
\Lambda_1(\partial)\Lambda_2^{m}(\partial)W_{t}(O)=-\frac{1}{2(m^{2}-1)}\sum_{j=2}^{m-1} & \Big[mC_{m}^{m-j}+(-1)^{m-j}mC_{m}^{m-j} \\
-&C_{m}^{m+1-j}+(-1)^{m+1-j}C_{m}^{m+1-j} \Big] \Lambda^{j}_{1}(\partial)\Lambda^{m+1-j}_2(\partial)W_{t}(O).
\end{align*}
Consequently, whatever $m$ is even or odd, it is sufficient to show that
\begin{equation*}
\Lambda^{j}_{1}(\partial)\Lambda^{m+1-j}_2(\partial)W_{t}(O)=0 \quad {\rm for}\quad  j=2,3,\cdots,m-1.
\end{equation*}

In the induction hypothesis, $\nabla^{m-3}_{x}\mathcal{L}w(O)=0$ (i.e. $\nabla^{m-3}_{\tau}\mathcal{L}^{2}w(O)=0$) implies that
\begin{equation*}
\Lambda_{1}^{m-3}(\partial)\mathcal{L}^{2}w =\Lambda_{1}^{m-4}(\partial)\Lambda_{2}(\partial)\mathcal{L}^{2}w =\cdots =\Lambda_{1}(\partial)\Lambda^{m-4}_{2}(\partial)\mathcal{L}^{2}w =\Lambda_{2}^{m-3}(\partial)\mathcal{L}^{2}w=0 \quad {\rm at} \quad O.
\end{equation*}
Going back to the decomposition of $\mathcal{L}^{2}$ in Step 4 and repeating the process in gaining equations \eqref{req1}, \eqref{req2}, the above relations tell us that
\begin{equation*}
\left\{\begin{array}{l}
(a-b)\Lambda_{1}^{j}(\partial)\Lambda_{2}^{m+1-j}(\partial)W_{1}(O) +(a+b)\Lambda_{1}^{j+1}(\partial)\Lambda_{2}^{m-j}(\partial)W_{2}(O)=0,  \\
(a+b)\Lambda_{1}^{j}(\partial)\Lambda_2^{m+1-j}(\partial)W_{1}(O) +(a-b)\Lambda_{1}^{j+1}(\partial)\Lambda_{2}^{m-j}(\partial)W_{2}(O)=0,
\end{array}\right.
\end{equation*}
for $j=2,3,\cdots,m-2$, and
\begin{equation} \label{lasteq}
\left\{\begin{array}{l}
(a-b)\Lambda_1(\partial)\Lambda_2^{m}(\partial)W_{1}(O) +(a+b)\Lambda_{1}^{2}(\partial)\Lambda_{2}^{m-1}(\partial)W_{2}(O)=0,  \\
(a+b)\Lambda_{1}^{m-1}(\partial)\Lambda_2^{2}(\partial)W_{1}(O) +(a-b)\Lambda_{1}^{m}(\partial)\Lambda_{2}(\partial)W_{2}(O)=0.
\end{array}\right.
\end{equation}
Note that the determinant of coefficient matrix is not equal to zero (i.e. $\left|\begin{array}{cc} a-b & a+b  \\    a+b & a-b \end{array}\right|=-4ab\neq0$), we obatin that
\begin{equation*}
\Lambda_{1}^{2}(\partial)\Lambda_2^{m-1}(\partial)W_{1} =\cdots=\Lambda_{1}^{m-2}(\partial)\Lambda_2^{3}(\partial)W_{1}=0 \quad {\rm at}\quad O,
\end{equation*}
\begin{equation*}
\Lambda_{1}^{m-1}(\partial)\Lambda_2^{2}(\partial)W_{2} =\cdots=\Lambda_{1}^{3}(\partial)\Lambda_2^{m-2}(\partial)W_{2}=0 \quad {\rm at}\quad O.
\end{equation*}
For the remainding terms, we bring the above results into the expressions of $\Lambda^{m}_1(\partial) \Lambda_2(\partial)W_{2}(O)$ and $\Lambda_1(\partial)\Lambda_2^{m}(\partial)W_{1}(O)$, that is: when $m$ is even, we have
\begin{equation*}
\Lambda^{m}_1(\partial)\Lambda_2(\partial)W_{2}=-\frac{1}{2m} \Big(C_{m}^{m-1}+C_{m}^{m-1} \Big) \Lambda^{2}_{1}(\partial)\Lambda^{m-1}_2(\partial)W_{2} =-\Lambda^{2}_{1}(\partial)\Lambda^{m-1}_2(\partial)W_{2} \quad {\rm at}\,~ O,
\end{equation*}
\begin{equation*}
\Lambda_1(\partial)\Lambda_2^{m}(\partial)W_{1}=-\frac{1}{2m} \Big(C_{m}^{1}+C_{m}^{1} \Big)\Lambda^{m-1}_{1}(\partial)\Lambda^{2}_2(\partial) W_{1} =-\Lambda^{m-1}_{1}(\partial)\Lambda^{2}_2(\partial)W_{1} \quad {\rm at}\,~ O;
\end{equation*}
when $m$ is odd, we have
\begin{align*}
\Lambda^{m}_1(\partial)\Lambda_2(\partial)W_{2}=-\frac{1}{2(m^{2}-1)} \Big(mC_{m}^{m-1}-mC_{m}^{m-1} -C_{m}^{m-2}+C_{m}^{m-2} \Big) \Lambda^{2}_{1}(\partial)\Lambda^{m-1}_2(\partial)W_{2}=0,
\end{align*}
\begin{equation*}
\Lambda_1(\partial)\Lambda_2^{m}(\partial)W_{1}=-\frac{1}{2(m^{2}-1)} \Big(mC_{m}^{1}-mC_{m}^{1}-C_{m}^{2}+C_{m}^{2} \Big)\Lambda^{m-1}_{1}(\partial)\Lambda^{2}_2(\partial)W_{1}=0 \quad {\rm at}\,~ O.
\end{equation*}
Combining these expresentations with \eqref{lasteq} yields
\begin{equation*}
\Lambda_1(\partial)\Lambda_2^{m}(\partial)W_{1} =\Lambda^{m-1}_{1}(\partial)\Lambda^{2}_2(\partial)W_{1} =\Lambda^{m}_1(\partial)\Lambda_2(\partial)W_{2} =\Lambda^{2}_{1}(\partial)\Lambda^{m-1}_2(\partial)W_{2}=0 \quad {\rm at}\,~O.
\end{equation*}

By induction, we have proved that $\nabla_x^m w(O)=\nabla_x^m u_0(O)=0$ for all $m\in \mathbb{N}_0$. Since $w$ and $u_0$ are analytic in $B_{R}$, we have $w=u_0\equiv0$ in $B_{R}$. The proof is complete.
\end{proof}
\begin{remark}
The proof of Lemma \ref{mainlem} relies heavily on the singularity analysis around a rectangular corner, which together with the polar coordinates has simplified our arguments. We believe that the results of this work extend to general corners with an arbitrary opening angle $\varphi\in (0, 2\pi)\backslash \{\pi\}$.
\end{remark}

\section{Proof of Main Theorem and Corollaries} \label{sec4}
{\bf Proof of Theorem \ref{Main}}. Suppose on the contrary that $u^{{\rm in}}$ can be extended as an incident wave (i.e a solution to \eqref{b}) to an open domain around the corner.
Without loss of generality, we assume that the corner is located at origin. Since $u^{{\rm sc}}\in[H_{0}^{2}(D)]^{2}$, we extend it to $B_{R}$ as a function which is zero outside $D$, i.e.
\begin{equation*}
\left\{\begin{array}{ll}
\mathcal{L}u^{{\rm in}}+\omega^{2}u^{{\rm in}}=0 & \quad {\rm in} \quad B_{R},  \\
\mathcal{L}u^{{\rm sc}}+\omega^{2}\rho_{0}u^{{\rm sc}}=\omega^{2}(1-\rho_{0})u^{{\rm in}}  &\quad {\rm in} \quad B_{R},  \\
u^{{\rm sc}}=Tu^{{\rm sc}}=0 & \quad {\rm on} \quad B_{R}\cap\partial D.
\end{array}\right.
\end{equation*}
By Lemma \ref{mainlem}, we obtain that $u^{{\rm in}}=u^{{\rm sc}}=0$ in $B_{R}$. And then $u^{{\rm in}}=u^{{\rm sc}}=0$ in $D$, which contradicts that the pair $(u^{{\rm in}},u^{{\rm sc}})$ are interior transmission eigenfunctions. The proof is complete.

{\bf Proof of Corollary \ref{cor1}}. If the kernel of the far-field operator $F$ is nontrivial, then there is a Herglotz wavefunction $u_{g}$ satisfying \eqref{b} in $\mathbb{R}^{2}$, and an outgoing $u^{{\rm sc}}$ with vanishing far-field. Rellich's lemma and unique continuation guarantee that $u^{{\rm sc}}$ vanishes outside the support of $1-\rho(x)$, that is, $u^{{\rm sc}}\equiv0$ in $\mathbb{R}^{2}\backslash \overline{D}$. It follows from the fact $\rho(x)\in L^{\infty}(\mathbb{R}^{2})$ and $u^{{\rm in}}\in[L^{2}(D)]^{2}$ that $u^{{\rm sc}}\in[H^{2}_{{\rm loc}}(\mathbb{R}^{2})]^{2}$, and therefore the restriction of $u^{{\rm sc}}$ and its first derivative to $\partial D$ must vanish. Hence the pair $(u^{{\rm sc}},u_{g})$ are interior transmission eigenfunctions in $D$, but $u_{g}$ extends past the corner, contradicting Theorem \ref{Main}.

{\bf Proof of Corollary \ref{cor2}}. Let $u_{1}$ and $u_{2}$ be solutions to the direct scattering problem \eqref{a}-\eqref{rad} corresponding to $(D_{1},\rho_{1})$ and $(D_{2},\rho_{2})$, respectively. Assume that $u_{1}^{\infty}(\hat{x}) =u_{2}^{\infty}(\hat{x})$ for all $\hat{x}\in\mathbb{S}$, then Rellich's lemma and unique continuation guarantee that $u_{1}^{{\rm sc}}(x)=u_{2}^{{\rm sc}}(x)$ for all $x\in\mathbb{R}^{2}\backslash (\overline{D_{1}\cup D_{2}})$.

If $D_{1}\neq D_{2}$, without loss of generality, we may assume that there exists a corner $O$ of $\partial D_{1}$ such that $O\not\in\partial D_{2}$. Since this corner stays away from $D_{2}$, the function $u_{2}$ satisfies the Navier equation with the frequency $\omega^{2}$ around $O$, while $u_{1}$ fulfills the Navier equation with the variable potential $\rho_{1}\omega^{2}$, that is,
\begin{equation*}
\left\{\begin{array}{ll}
\mathcal{L} u_{1}+\rho_{1}\omega^{2} u_{1}=0 &\quad {\rm in}~~B_{R}, \\
\mathcal{L} u_{2}+\omega^{2} u_{2}=0 &\quad {\rm in}~~B_{R}, \\
u_{1}=u_{2},\quad Tu_{1}=Tu_{2}  &\quad {\rm on}~~B_{R}\cap\partial D_{1}.
\end{array}\right.
\end{equation*}
Here, $R$ is sufficient small such $B_{R}\cap D_{2}=\emptyset$ and $\rho_{1}$ is a piecewise constant in $B_{R}$, i.e. $\rho_{1}=1$ in $B_{R}\backslash\overline{D_{1}}$, $\rho_{1}= c_{0}\neq 1$ in $B_{R}\cap D_{1}$. Since $u_{2}$ is analytic in $B_{R}$, the data of $u_{1}$, $Tu_{1}$ on $B_{R} \cap\partial D_{1}$ are analytic by the transmission boundary conditions. By the Cauchy-Kowalewski theorem and Holmgren's theorem, we can find a solution $\widetilde{u}_{1}$ to the following problem in a piecewise analytic domain:
\begin{equation*}
\left\{\begin{array}{ll}
\mathcal{L} \widetilde{u}_{1}+c_{0}\omega^{2} \widetilde{u}_{1}=0 &\quad {\rm in}~~B_{\widetilde{R}}\backslash\overline{D_{1}}, \\
\widetilde{u}_{1}=u_{1},\quad T\widetilde{u}_{1}=Tu_{1}  &\quad {\rm on}~~B_{\widetilde{R}}\cap\partial D_{1},
\end{array}\right.
\end{equation*}
for some $0<\widetilde{R}<R$. Setting $w=u_{1}-u_{2}$ in $B_{\widetilde{R}}\cap D_{1}$ and $w= \widetilde{u}_{1}-u_{2}$ in $B_{\widetilde{R}}\backslash\overline{D_{1}}$ yields
%then $(w,u_{2})$ satisfies the equations system \eqref{ITP} with a piecewise constant $q_{1}=\rho_{1}$ (i.e. $\rho_{1}=1$ in $B_{R}\backslash\overline{D_{1}}$, $\rho_{1}=c\neq 1$ in $B_{R}\cap D_{1}$) and $q_{2}=1$.
\begin{equation*}
\left\{\begin{array}{ll}
\mathcal{L}w+c_{0}\omega^{2}w=\omega^{2}(1-c_{0})u_{2} & \quad {\rm in} \quad B_{\widetilde{R}},  \\
\mathcal{L}u_{2}+\omega^{2}u_{2}=0  &\quad {\rm in} \quad B_{\widetilde{R}},  \\
w=Tw=0 & \quad {\rm on} \quad B_{\widetilde{R}}\cap\partial D_{1}.
\end{array}\right.
\end{equation*}
Now, applying lemma \ref{mainlem}, we obtain the vanishing of $u_{2}$ near $O$ and thus $u_{2} \equiv0$ (or equivalently, $u_{2}^{{\rm sc}}=-u^{{\rm in}}$) in $\mathbb{R}^{2}\backslash \overline{D_{2}}$ by the unique continuation. This implies that the radiating solution $u_{2}^{{\rm sc}}$ is also an entire function in $\mathbb{R}^{2}$. Hence, $u_{2}^{{\rm sc}} =u^{{\rm in}}\equiv0$ in $\mathbb{R}^{2}$, contradicting our assumption on the incident wave. Thus, $D_{1}=D_{2}$.

\vspace{0.3cm}
{\bf Acknowledgments.}

The work of G.H. Hu is partially supported by the National Natural Science Foundation of China (No. NSFC 12425112) and the Fundamental Research Funds for Central Universities in China (No. 050-63213025). The work of J.L. Xiang is supported by the Natural Science Foundation of China (No. 12301542), the Open Research Fund of Hubei Key Laboratory of Mathematical Sciences (Central China Normal University, MPL2025ORG017) and the China Scholarship Council. Part of this work was based on discussions with Dr. T. Yin from AMSS, CAS, China, whom are greatly acknowledged.

\end{document}